\newcommand{\tn}{\textnormal}
\newcommand{\mb}{\mathbb} \newcommand{\lp}{\left(}
\newcommand{\rp}{\right)} \newcommand{\lb}{\left\lbrace}
\newcommand{\rb}{\right\rbrace} 
\newcommand{\mc}{\mathcal}
\definecolor{rred}{rgb}{0.7,0.0,0.2}
\definecolor{bblue}{rgb}{0.2,0.0,0.7}
\newcommand{\secref}[1]{Section \ref{sec:#1}}
\newtheorem{theorem}{Theorem}
\newtheorem{lemma}{Lemma}
\newcommand{\seclab}[1]{\label{sec:#1}}
\newcommand{\eqlab}[1]{\label{eq:#1}}
\renewcommand{\eqref}[1]{(\ref{eq:#1})}
\newcommand{\figref}[1]{Fig.~\ref{fig:#1}}
\newcommand{\figlab}[1]{\label{fig:#1}}
\newcommand{\lemmaref}[1]{Lemma~\ref{lemma:#1}}
\newcommand{\lemmalab}[1]{\label{lemma:#1}}
\newcommand{\thmref}[1]{Theorem~\ref{theorem:#1}}
\newcommand{\thmlab}[1]{\label{theorem:#1}}
\newcommand{\asuref}[1]{Assumption~\ref{assumption:#1}}
\newcommand{\asulab}[1]{\label{assumption:#1}}
\newtheorem{cor}{Corollary}
\newtheorem{asu}{Assumption}
\newcommand{\tpitchfork}{%
	\vbox{
		\baselineskip\z@skip
		\lineskip-.52ex
		\lineskiplimit\maxdimen
		\m@th
		\ialign{##\crcr\hidewidth\smash{$-$}\hidewidth\crcr$\pitchfork$\crcr}
	}%
}
\title{Entry-exit functions in fast-slow systems with intersecting eigenvalues}
\author{Panagiotis Kaklamanos{$^\dagger$}, Christian Kuehn{$^\star$}, Nikola Popovi{\'c}{$^\dagger$}, Mattia Sensi{$^\ast$}\\[1em]
\small $^\dagger$Maxwell Institute for Mathematical Sciences and School of Mathematics,
University of Edinburgh,\\\small James Clerk Maxwell Building, King's Buildings, Peter Guthrie Tait Road,\\\small 
Edinburgh, EH9 3FD,
United Kingdom\\
\small $^\star$Technical University of Munich, Department of Mathematics, Boltzmannstrasse 3,\\\small 85748 Garching bei M\"unchen, Germany\\
\small $^\ast$MathNeuro Team, Inria at Universit\' e C\^ote d'Azur, 2004 Rte des Lucioles, 06410 Biot, France\\
\small Corresponding author. Email: \texttt{mattia.sensi@inria.fr}}
\date{\today}
\begin{document}

\maketitle

\begin{abstract}
We study delayed loss of stability in a class of fast-slow systems with two fast variables and one slow one, where the linearisation of the fast vector field along a one-dimensional critical manifold has two real eigenvalues which intersect before the accumulated contraction and expansion are balanced along any individual eigendirection.
That interplay between eigenvalues and eigendirections renders the use of known entry-exit relations unsuitable for calculating the point at which trajectories exit neighbourhoods of the given manifold. We illustrate the various qualitative scenarios that are possible in the class of systems considered here, and we propose novel formulae for the entry-exit functions that underlie the phenomenon of delayed loss of stability therein.
\end{abstract}

\section{Introduction}
\seclab{intro}
The phenomenon of delayed loss of stability in two-dimensional fast-slow systems of the form
\begin{subequations}\eqlab{2d}
\begin{align}
    x' &= \varepsilon, \eqlab{2d-a}\\
    z' &=Z(x,z;\varepsilon), \eqlab{2d-b}
\end{align}
\end{subequations}
with $\varepsilon>0$ sufficiently small and $Z:\mb{R}^3\to\mb{R}$ smooth, has been extensively studied \cite{de2008smoothness,de2016entry,jardon2021geometric,liu2000exchange,neishtadt1987persistence,neishtadt1988persistence,schecter1985persistent,schecter2008exchange}. In particular, we assume here that the $x$-axis is invariant under the flow of \eqref{2d}, i.e., that $Z(x,0;\varepsilon)=0$, and that it undergoes a change of stability at $x=0$: specifically, we take the $x$-axis to be attracting and repelling for $x<0$ and $x>0$, respectively, with $\partial_zZ(x,0;\varepsilon)<0$ for $x<0$ and $\partial_zZ(x,0;\varepsilon)>0$ for $x>0$, respectively. Delayed loss of stability can then be characterised as follows: trajectories of \eqref{2d} that enter a $ \delta $-neighbourhood of the $x$-axis at a point with $x=x_0<0$ and $\delta$ sufficiently small evolve close thereto until the accumulated contraction is balanced by accumulated expansion instead of diverging immediately from the $x$-axis after crossing $x=0$; cf. \figref{2dsf}. Contraction and expansion are balanced at a point with $x=x_1+o(1)$, where $x_1$ is obtained by solving
\begin{align}
\int_{x_0}^{x_1} \frac{\partial Z}{\partial z}(x,0;0)\tn{d}x =0. \eqlab{wio-pl1}
\end{align}
Equation \eqref{wio-pl1} is known as the \textit{entry-exit relation}; correspondingly, the left-hand side therein is the \textit{way-in/way-out} or \textit{entry-exit} function, see  \cite{de2008smoothness,de2016entry,jardon2021geometric,liu2000exchange,neishtadt1987persistence,neishtadt1988persistence,schecter1985persistent,schecter2008exchange} for details.

\begin{figure}[H]\centering
	\begin{subfigure}[b]{0.3\textwidth}
		\centering
		\begin{tikzpicture}
		\node at (0,0){
			\includegraphics[width=0.9\textwidth]{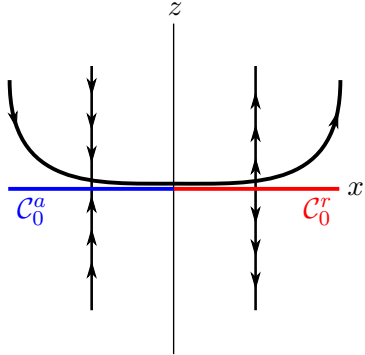}};
		\node at (2.4,0.0) {$x$};
		\node at (0.0,2.4) {$z$};
		
		\node at (-1.9,-0.3) {$\textcolor{blue}{\mc{C}_0^a}$};
		\node at (1.9,-0.3) {$\textcolor{red}{\mc{C}_0^r}$};
		\end{tikzpicture}
	\end{subfigure}
	\caption{Invariant manifold ($ x $-axis) with change of stability at the origin and a canard trajectory that undergoes delayed loss of stability; see also \cite{de2008smoothness}. (Here and in the following, attracting portions of a critical manifold are indicated in blue, while repelling portions are shown in red.)}
	\figlab{2dsf}
\end{figure}

In the language of geometric singular perturbation theory (GSPT) \cite{fenichel1979geometric}, equation~\eqref{2d} has a one-dimensional critical manifold
\begin{align*}
    \mc{C}_0 = \lb (x,z)\in \mb{R}^2~\lvert~ Z(x,z;0)=0\rb
\end{align*}
along which the stability changes from attracting to repelling. A subset of $\mc{C}_0$ is called \textit{normally hyperbolic} if $\partial_zZ(x,z;0)\neq0$. A normally hyperbolic portion of $\mc{C}_0$ is \textit{attracting} if $\partial_zZ(x,z;0)<0$, and is denoted by $\mc{C}_0^a$; correspondingly, it is \textit{repelling} if $\partial_zZ(x,z;0)>0$, and denoted by $\mc{C}_0^r$. Therefore, for \eqref{2d}, we write
	\begin{align*}
	\mc{C}_0^a = \lb (x,z)\in \mc{C}_0~\lvert~x<0\rb \quad\text{and}\quad
	\mc{C}_0^r=\lb (x,z)\in \mc{C}_0~\lvert~x>0\rb.
	\end{align*} 
	The origin is then a \textit{non-hyperbolic} point, since  it holds that $\partial_zZ(0,0;0)=0$.

Trajectories of \eqref{2d} with $\varepsilon>0$ sufficiently small which, after crossing a neighbourhood of a non-hyperbolic point, evolve close to a repelling manifold for a considerable amount of time, are called \textit{canard trajectories} \cite{dumortier1996canard,krupa2001extending,szmolyan2001canards}. Trajectories that experience delayed loss of stability along an invariant manifold of \eqref{2d}, as outlined above, are therefore canard trajectories. However, we emphasise that the above merely represents one example of a canard, and that a plethora of delicate canard phenomena can occur in other planar fast-slow systems with different singular geometries, for instance when the critical manifold features a fold \cite{krupa2001extending}. 

In this paper, we focus on the following extension of \eqref{2d}:
\begin{subequations}\eqlab{3d}
\begin{align}
    x' &= \varepsilon, \eqlab{3d-a}\\
    z_1' &=Z_1(x,z_1,z_2;\varepsilon), \eqlab{3d-b}\\
    z_2' &=Z_2(x,z_1,z_2;\varepsilon), \eqlab{3d-c}
\end{align}
\end{subequations}
where $Z_1$ and $Z_2$ are assumed to be sufficiently regular, that is, $C^\infty$-smooth in all of their arguments for simplicity, and where the critical manifold is given by $ \mc{C}_0 = \lb z_1=z_2=0\rb $, and is invariant for $ \varepsilon>0 $.
The linearisation of the fast subsystem \{\eqref{3d-b},\eqref{3d-c}\} along $\mc{C}_0$ is two-dimensional; we write $ A(x;\varepsilon) $ for its Jacobian matrix.
In the singular limit of $ \varepsilon=0 $, we denote the eigenvalues of $A(x;0)$ by $ \xi^{\pm}(x;0) $.
The critical manifold $\mc{C}_0$ is then normally hyperbolic where
\begin{align*}
    \Re\lb \xi^+(x;0), \xi^-(x;0)\rb \neq 0, 
\end{align*}
with $\Re\lb \cdot \rb$ denoting the real part of its argument. 

\begin{figure}[ht]\centering
	\begin{subfigure}[b]{0.5\textwidth}
	\centering
		\centering
		\begin{tikzpicture}
		\node at (0,0){
			\includegraphics[width=1\textwidth]{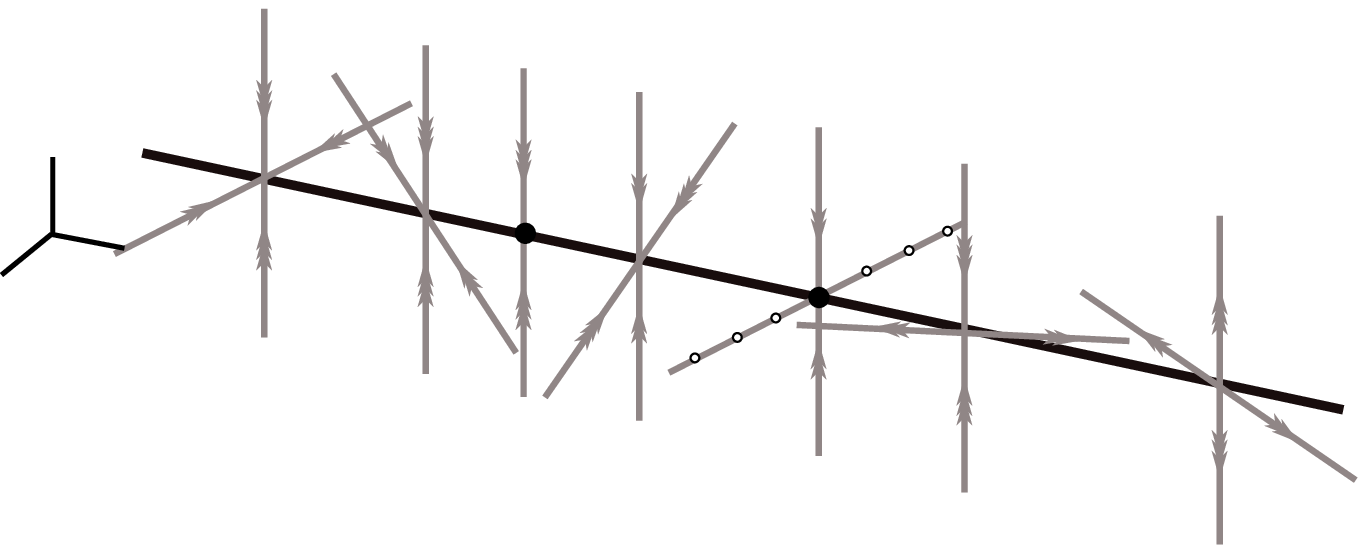}};
		\node at (-3.17,0.09) {$x$};
		\node at (-3.8,1) {$z_1$};
		\node at (-4.3,-0.13) {$z_2$};
		\node at (1,1.125) {$x=x^+$};
		\node at (-0.84,1.5) {$x=x^*$};
		\node at (-3.15,1) {$\mc{C}_0$};
		\end{tikzpicture}
		\caption{}
		\figlab{3dsf_a}
	\end{subfigure}
	~
	\begin{subfigure}[b]{0.45\textwidth}
	\centering
		\begin{tikzpicture}
		\node at (0,0){
			\includegraphics[scale = 0.6]{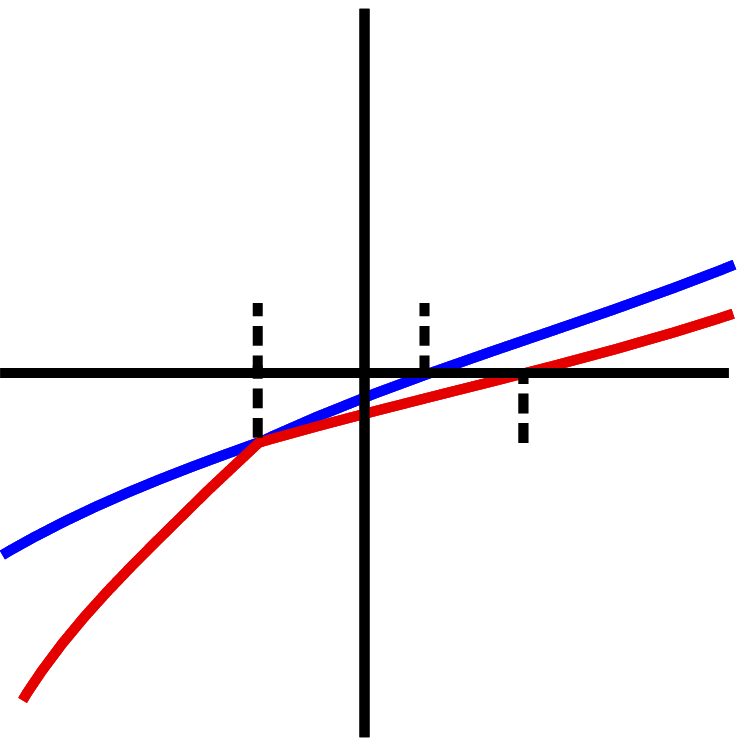}};
		\node at (2.4,0.0) {$x$};
		\node at (1.05,-0.6) {$x=x^-$};
		\node at (0.75,0.72) {$x=x^+$};
		\node at (-0.7,0.7) {$x=x^*$};
		\node at (-1.7,-0.33) {$\textcolor{blue}{\xi^+(x;0)}$};
		\node at (-1.,-1.8) {$\textcolor{red}{\xi^-(x;0)}$};
		\end{tikzpicture}
		\caption{}
		\figlab{3dsf_b}
	\end{subfigure}
	\caption{(a) Critical manifold $\mc{C}_0$ on which the fast subsystem has an improper node at some $x=x^*<x^+$, and a centre subspace at $x=0$. (b) Real eigenvalues $\xi^{\pm}(x;0)$ for the Jacobian matrix $A$ about $\mc{C}_0$: $\xi^+(x^*;0) = \xi^-(x^*;0)$, where the unique eigenvalue has geometric multiplicity $1$ at $x=x^*$. Moreover, $\xi^+(x;0)>0$ for $x>{x^+}$ and $\xi^-(x;0)>0$ for $x>{x^-}$; delayed loss of stability in this setting is studied in \secref{general}.}
	\figlab{3dsf}
\end{figure}

Specifically, we will be interested in the case where the eigenvalues $ \xi^\pm(x;0) $ are real\footnote{We refer the reader to \cite{baer1989slow,benoit2006dynamic,hayes2016geometric,neishtadt1987persistence,neishtadt1988persistence,su1993delayed} for results on the case where $\xi^\pm(x;0)$ are complex conjugates and \eqref{3d} passes through a Hopf bifurcation of the fast subsystem \eqref{3d-b}-\eqref{3d-c}.} and negative for $ x<x^* $, where $ x^*\in\mb{R} $, and where, moreover, $ \xi^-(x^*;0) = \xi^+(x^*;0) $, i.e., where the eigenvalues ``collide'' at $x=x^*$. Importantly, we assume that this collision occurs at a point where the accumulated contraction and expansion have not been balanced in either eigendirection individually, in the sense of equation~\eqref{wio-pl1}; these ideas will be made more precise in \secref{general} below. 
Moreover, after their ``collision''  at a point with $x = x^*$, at least one of the eigenvalues becomes positive as $ x $ increases. That is, for a given trajectory that enters a $ \delta $-neighbourhood of $ \mc{C}_0 $  with $ x<x^* $, contraction and expansion are accumulated as $ x $ increases until the trajectory exits the $ \delta $-neighbourhood when contraction and expansion are balanced, in analogy to \eqref{wio-pl1}. However, as at $x = x^*$ the unique eigenvalue has algebraic multiplicity $2$, and typically geometric multiplicity $1$, the two attracting eigendirections in the fast subsystem \eqref{3d-b}-\eqref{3d-c} are not linearly independent, with the corresponding point on $\mc{C}_0$ at $x = x^*$ an improper node for that subsystem. The resulting interaction between the subspaces of the linearisation about $\mc{C}_0$ with varying $ x $, which is illustrated in \figref{3dsf}, makes an extension of the known entry-exit relation in \eqref{wio-pl1} not straightforward. To the best of our knowledge, delayed loss of stability in this setting has not been studied before. 

We will, therefore, propose novel, extended formulae for the entry-exit relation for \eqref{3d}, in analogy to equation~\eqref{wio-pl1} for \eqref{2d}, in order to calculate the exit point in the above setting. To that end, we will first express the fast subsystem \eqref{3d-b}-\eqref{3d-c} in polar coordinates, exposing a two-dimensional fast-slow system of the form of
\begin{align*}
x' = \varepsilon, \quad
\theta' = \Phi(x,\theta,\varepsilon),
\end{align*}
in which $x$ is again the slow variable and the angular coordinate $\theta$ is the fast variable; crucially, equilibria of the $\theta$-equation correspond to angles of the eigendirections in the fast system \{\eqref{3d-b},\eqref{3d-c}\}. Then, depending on the properties of the critical manifold of this auxiliary system, i.e., on whether that manifold features a transcritical singularity \cite{krupa2001trans} or whether it contains a portion which is invariant for $\varepsilon>0$ \cite{schecter1985persistent}, we track the eigenspaces that the trajectories ``choose'' to follow for varying $x$, which allows us to construct the entry-exit relation for each of these cases. 

The paper is organised as follows. In \secref{general}, we formulate our main results for the general setting of equation~\eqref{3d} in the form of \thmref{thrm}. In \secref{sys}, we introduce a simple example, a system with one-way coupling, to demonstrate our methodology. Then, we modify that system to include an $\varepsilon$-dependence in the vector field, and we show that the conclusions reached differ from the $\varepsilon$-independent case, as predicted by \thmref{thrm}. {Finally, in the same section, we include an example with added nonlinearities in the vector field of our $\varepsilon$-dependent system, and we show that, in terms of delayed loss of stability, the behaviour of the latter is similar to that of our linear example.} We conclude the paper in \secref{conc}. 




\section{Extended entry-exit formula}\seclab{general}

In this section, we derive our main result, \thmref{thrm}; to that end, we first formulate a number of underlying assumptions. Crucially, we transform equation~\eqref{3d} to cylindrical coordinates, which will allow us to describe naturally the dynamics near $x=x^\ast$.

\subsection{Main assumptions}
We consider systems of the form
\begin{subequations}\eqlab{gen}
	\begin{align}
	{x}'&=\varepsilon,\eqlab{gen-a}\\
	{z}_1'&=Z_1(x,z_1,z_2,\varepsilon),\eqlab{gen-b}\\
	{z}_2'&=Z_2(x,z_1,z_2,\varepsilon),\eqlab{gen-c}
	\end{align}
\end{subequations}
where the functions $Z_1$ and $Z_2$ are $C^\infty$-smooth in all of their arguments and the corresponding critical manifold is now given by
\begin{align*}
    \mc{C}_0= \lb (x,z_1,z_2)\in \mb{R}^3~\lvert~z_1=0=z_2\rb.
\end{align*}
The linearisation of \eqref{gen} about $\mc{C}_0$ reads
\begin{subequations}\eqlab{genmat}
	\begin{align}
	\begin{matrix}
	{x}
	\end{matrix}'&=\varepsilon,\\
	\begin{pmatrix}
	{z}_1 \\
	{z}_2
	\end{pmatrix}'
	&=
	A(x;\varepsilon)
	\begin{pmatrix}
	z_1 \\ z_2
	\end{pmatrix}, \eqlab{zsub}
	\end{align} 
\end{subequations}
where 
\begin{align}
A(x;\varepsilon) = \begin{pmatrix}
f_1(x;\varepsilon) & f_2(x;\varepsilon)\\
g_1(x;\varepsilon) & g_2(x;\varepsilon)
\end{pmatrix} \eqlab{amat}
\end{align}
with
\begin{gather*}
    f_1(x;\varepsilon) = \dfrac{\partial Z_1}{\partial{z_1}}(x,0,0;\varepsilon), \quad  f_2(x;\varepsilon) = \dfrac{\partial Z_1}{\partial{z_2}}(x,0,0;\varepsilon),
    \\
    g_1(x;\varepsilon) = \dfrac{\partial Z_1}{\partial{z_2}}(x,0,0;\varepsilon), \quad\text{and}\quad  g_2(x;\varepsilon) = \dfrac{\partial Z_2}{\partial{z_2}}(x,0,0;\varepsilon).
\end{gather*}
We denote the eigenvalues of the matrix $ A(x; \varepsilon) $ by
\begin{align}
\xi^{\pm}(x;\varepsilon) =\frac{1}{2}\lp  \tn{tr}{A}(x,\varepsilon)\pm\sqrt{\lp \tn{tr}{A}(x,\varepsilon)\rp^2-4\lp \tn{det}{A}(x,\varepsilon)\rp}\rp; \eqlab{eigs}
\end{align}
alternatively, we may denote them by
\begin{align}
\mu_1(x; \varepsilon) : = \begin{cases}
\xi^+(x; \varepsilon) \quad \tn{if } x<x^*  \\
\xi^-(x; \varepsilon) \quad \tn{if } x>x^* 
\end{cases} 
\quad\text{and}\quad
\mu_2(x; \varepsilon) : = \begin{cases}
\xi^-(x; \varepsilon) \quad \tn{if } x<x^*  \\
\xi^+(x; \varepsilon) \quad \tn{if } x>x^* 
\end{cases};
\eqlab{mus}
\end{align}
see \figref{gen-eigs} for an illustration. We note that the representation in \eqref{eigs} is potentially only $C^0$-smooth, i.e., continuous, at $x=x^*$; however, it has the advantage of $\xi^+$ always being the ``dominant" eigenvalue.


\begin{figure}[H]\centering

    \begin{subfigure}[b]{0.3\textwidth}
	\centering
			\begin{tikzpicture}
	\node at (0,0){
		\includegraphics[scale = 0.6]{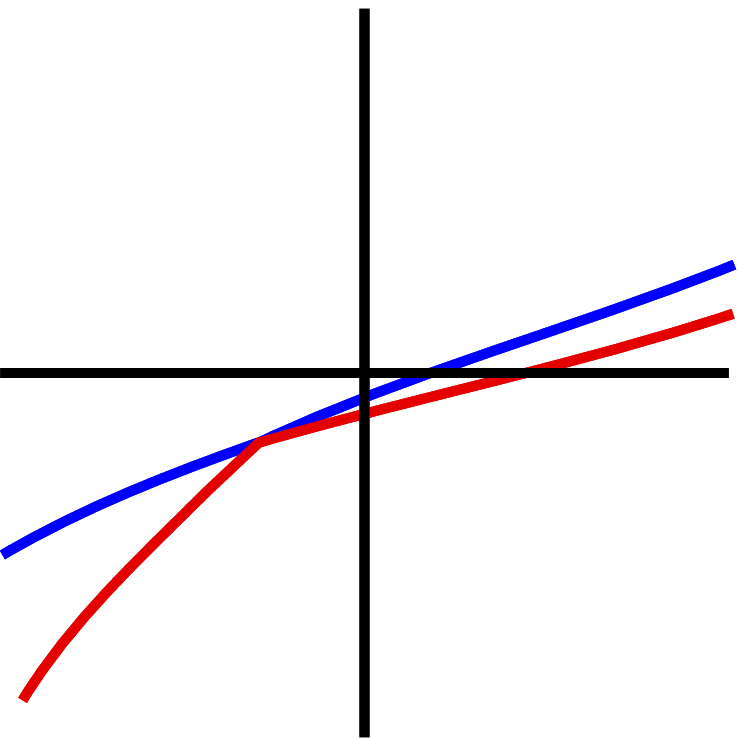}};
	
	\node at (2.4,0.0) {$x$};
	
	\node at (1.7,1)
	{\small \textcolor{blue}{$\xi^+(x;0)$}};
	\node at (-1.15,-1.75) {\small \textcolor{red}{$\xi^-(x;0)$}};

	\node at (-0.8,0.8) {$\mu_1(x;0)$};
	\draw[-Stealth] (-1.2,0.5) -- (-1.2,-0.5);
	\draw[-Stealth] (-0.15,0.8) -- (2.15,0.38);
	
	\node at (1.5,-1.5) {$\mu_2(x;0)$};
	\draw[-Stealth] (1.3,-1.2) -- (1.3,0.35);
	\draw[-Stealth] (0.8,-1.5) -- (-1.3,-1.1);
	\end{tikzpicture}
		\caption{}
	\end{subfigure}
	\begin{subfigure}[b]{0.3\textwidth}
	\centering
			\begin{tikzpicture}
	\node at (0,0){
		\includegraphics[scale = 0.6]{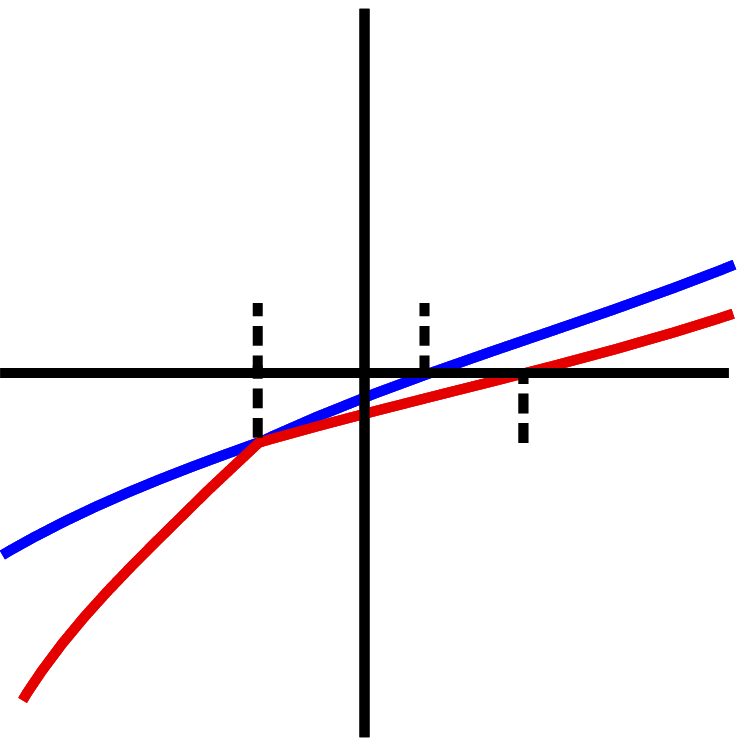}};
	
	\node at (2.4,0.0) {$x$};
	\node at (1.05,-0.6) {$x=x_-$};
	\node at (0.75,0.6) {$x=x_+$};
	\node at (-0.7,0.7) {$x=x^*$};
	
	\end{tikzpicture}
		\caption{}
	\end{subfigure}
	\begin{subfigure}[b]{0.3\textwidth}
		\centering
			\begin{tikzpicture}
	\node at (0,0){
		\includegraphics[scale = 0.6]{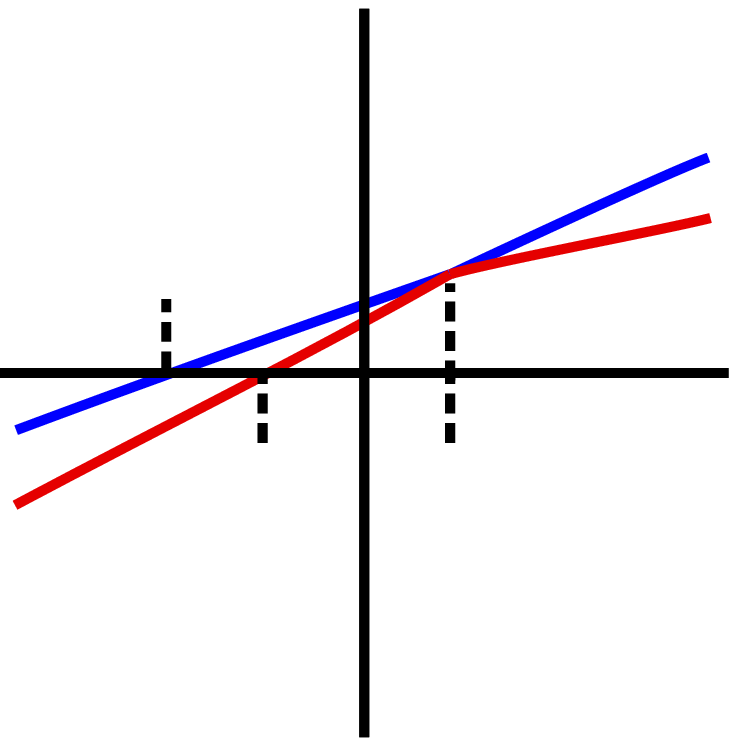}};
	\node at (2.4,0.0) {$x$};
	\node at (-0.7,-0.65) {$x=x_-$};
	\node at (0.69,-0.55) {$x=x^*$};
	\node at (-1.1,0.62) {$x=x_+$};
	\end{tikzpicture}
		\caption{}
	\end{subfigure}
	\caption{(a) The eigenvalues of the matrix $A(x; 0)$ in \eqref{amat} can be expressed either via the functions $\xi^{\pm}$ as in equation~\eqref{eigs}, where $\xi^{+}$ is always above $\xi^{-}$, or in the form of $\mu_{i}$ ($i=1,2$) as in equation~\eqref{mus}; (b) $x^*<0$ and $x_\pm>0$; (c) $x^*>0$ and $x_\pm<0$.}
	\figlab{gen-eigs}
\end{figure}
In this paper, we are concerned with the scenario where the following set of assumptions is satisfied:
\begin{asu}\asulab{mus}
We consider an interval $ I\subset \mb{R} $, and we assume the following.
\begin{enumerate}
    \item The critical manifold $\mc{C}_0$ is invariant for all $\varepsilon>0$. 
	\item The eigenvalues $\xi^{\pm}(x;0)$ in \eqref{eigs} are real and non-decreasing for $x\in I$.
	\item There exists $ x_+ \in I$ such that $\xi^+(x_+;0)=0$ and/or $ x_- \in I$ such that $\xi^-(x_-;0)=0$; if $x_+$ and/or $x_-$ exist, they are unique.
	\item There exists a unique $ x^*\in I$ such that $\xi^+( x^*;0)=\xi^-( x^*;0)$ holds.
	\item It holds that $ x^*<\min\lb x_1^{(1)}, x_1^{(2)}\rb $, where $ x_1^{(1)}$ and $ x_1^{(2)} $ are such that
	\begin{align*}
	\int_{x_0}^{x_1^{(1)}} \mu_1(x;0)\tn{d}x =0 \quad\text{and}\quad \int_{x_0}^{x_1^{(2)}} \mu_2(x;0)\tn{d}x =0.
	\end{align*}
	If one of the above two integral equations has no solution, we take $x_i^{(1)}=+\infty$ ($i=1,2$).
	\item The eigenvalue $\xi^* := \xi^+(x^*;0)=\xi^-(x^*;0)$ of the matrix $A(x^*;0)$ has geometric multiplicity $1$.
\end{enumerate}
\end{asu}

Note that the first item in \asuref{mus} above implies that the critical manifold $\mc{C}_0$ has no folds in the sense of \cite{krupa2001extending}. From the second and third items, it follows that $\mc{C}_0$ can be decomposed into attracting and repelling branches, as follows:
\begin{gather*}
    \mc{C}_0^a= \lb (x,z_1,z_2)\in \mc{C}_0~\lvert~\xi^\pm(x; \varepsilon)<0\rb\quad\text{and}
    \\
    \mc{C}_0^r= \lb (x,z_1,z_2)\in \mc{C}_0~\lvert~\xi^+(x; \varepsilon)>0\ \tn{or}\ \xi^-(x; \varepsilon)>0\rb.
\end{gather*}
It is important for our analysis that the fast subsystem in \eqref{zsub} undergoes no Hopf bifurcations along $\mc{C}_0$; from the analysis in \cite{kaklamanos2022bifurcations,letson2017analysis}, it follows that if either one of the manifolds given by $Z_{1}(x,z_1,z_2;0)=0$ or $Z_{2}(x,z_1,z_2;0)=0$ has a fold line, then $\mc{C}_0$ has a fold point, with the fast subsystem \eqref{zsub} undergoing a Hopf bifurcation close to that point. Moreover, from the third point in \asuref{mus}, each eigenvalue can become zero at at most one point which excludes, for instance, the case where one of the eigenvalues is constant and zero.

By the last three items in \asuref{mus}, at the point $x=x^*$ where the eigenvalues $\xi^\pm$ intersect, the two corresponding eigenspaces ``collide'' into one, at a point where contraction and expansion have not been balanced along each eigendirection individually: in particular, these eigenvalues can attain either negative or positive values at their intersection; recall panels (a) and (b) of \figref{gen-eigs}, respectively. We are therefore interested in how this interaction between the eigenspaces affects the overall dynamics of the system, in terms of its implications about the accumulated contraction and expansion and an entry-exit relation analogous to the one in \eqref{wio-pl1}. If at $x=x^*$ the unique eigenvalue $\xi^*$ has geometric multiplicity 2, then the system is globally diagonalisable, and delayed loss of stability can be studied along each eigenspace separately.

Finally, our analysis here is local and focused on the phenomenon of delayed loss of stability along $\mc{C}_0$; higher-order nonlinearities do not contribute, but can potentially play the role of a return mechanism that re-injects trajectories onto the attracting portion $\mc{C}^a_0$ of $\mc{C}_0$, forming closed trajectories that contain plateau segments; see \cite{de2016entry,desroches2012mixed,kaklamanos2022bifurcations,kuehn2011decomposing,sadhu2019complex} for examples of return mechanisms in three-dimensional fast-slow systems.

\subsection{Polar coordinates and ``hidden'' dynamics}
The interaction and collision of eigendirections described above can be more easily studied by transforming the fast subsystem in \eqref{zsub} into polar coordinates, which corresponds to the full system in \eqref{genmat} being written in cylindrical coordinates:
\begin{lemma} \lemmalab{polars}
	In cylindrical coordinates $ (x, r, \theta) $, with $x=x$, $z_1=r \cos\theta$, and $z_2=r \sin\theta$, equation~\eqref{genmat} reads 
	\begin{subequations}\eqlab{3pc}
		\begin{align}
		x' &= \varepsilon, \eqlab{3pc-a}\\
		r' &= \lp f_1(x;\varepsilon)\cos\theta+f_2(x;\varepsilon)\sin\theta\rp r\cos\theta+\lp g_1(x;\varepsilon)\cos\theta+g_2(x;\varepsilon)\sin\theta\rp r \sin\theta, \eqlab{3pc-b}\\
		\theta' &= f_2(x;\varepsilon)\sin^2\theta+g_1(x;\varepsilon)\cos^2\theta+\lp g_2(x;\varepsilon)-f_1(x;\varepsilon)\rp\cos\theta\sin\theta. \eqlab{3pc-c}
		\end{align}
	\end{subequations}
\end{lemma}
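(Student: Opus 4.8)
The plan is to treat \eqref{3pc} as the image of the linear fast subsystem \eqref{zsub} under the change of variables $z_1 = r\cos\theta$, $z_2 = r\sin\theta$ (smooth and invertible away from $r=0$), and simply to compute $r'$ and $\theta'$ by the chain rule. Since the slow variable $x$ is common to both coordinate systems, \eqref{3pc-a} is immediate, and the only work is to re-express the derivatives of $r$ and $\theta$ in terms of the right-hand sides of \eqref{zsub}.

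First I would differentiate the two defining relations along trajectories, which by the product rule yields $z_1' = r'\cos\theta - r\theta'\sin\theta$ and $z_2' = r'\sin\theta + r\theta'\cos\theta$. This is a linear system for the pair $(r', r\theta')$ whose coefficient matrix is an orthogonal rotation by $\theta$; inverting it, equivalently forming the two combinations $\cos\theta\cdot z_1' + \sin\theta\cdot z_2'$ and $-\sin\theta\cdot z_1' + \cos\theta\cdot z_2'$, produces the projection identities $r' = z_1'\cos\theta + z_2'\sin\theta$ and $r\theta' = z_2'\cos\theta - z_1'\sin\theta$.

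Next I would substitute the components of \eqref{zsub}, namely $z_1' = f_1 z_1 + f_2 z_2$ and $z_2' = g_1 z_1 + g_2 z_2$, together with $z_1 = r\cos\theta$ and $z_2 = r\sin\theta$, into these two identities. Because the subsystem is linear, every resulting term carries a common factor of $r$. Collecting the coefficients of $\cos\theta$ and $\sin\theta$ in the expression for $r'$ reproduces \eqref{3pc-b} directly, while in the second identity the factor of $r$ on the right cancels the $r$ multiplying $\theta'$ on the left, leaving a purely angular right-hand side; gathering the $\sin^2\theta$, $\cos^2\theta$, and $\cos\theta\sin\theta$ contributions then gives \eqref{3pc-c}. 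The cancellation of $r$ in the $\theta$-equation is the one structural point worth emphasising: it shows that $\theta'$ depends only on $(x,\theta)$ and not on $r$, which is exactly what permits the later reduction to a planar $(x,\theta)$ fast-slow system and the identification of equilibria of the $\theta$-equation with the eigendirections of $A(x;\varepsilon)$.

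I do not expect a genuine obstacle here, as the argument is routine trigonometric bookkeeping. The only subtlety requiring a word of care is that the polar change of coordinates is singular at $r=0$, so that a priori the $\theta$-equation is defined only off the critical manifold $\mc{C}_0 = \{r=0\}$; linearity of \eqref{zsub} guarantees, however, that the expression obtained for $\theta'$ extends smoothly across $r=0$, so that \eqref{3pc-c} is valid as stated on the whole cylinder, and in particular along $\mc{C}_0$ itself.
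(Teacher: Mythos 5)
Your proposal is correct and coincides with the paper's own proof, which is simply recorded as ``Direct calculations'': differentiate $z_1=r\cos\theta$, $z_2=r\sin\theta$, invert the rotation to get $r'=z_1'\cos\theta+z_2'\sin\theta$ and $r\theta'=z_2'\cos\theta-z_1'\sin\theta$, substitute the linear right-hand sides of \eqref{zsub}, and cancel the common factor of $r$ in the angular equation. One caveat: if you actually carry out the second projection with $z_1'=f_1z_1+f_2z_2$, the $f_2$-contribution comes out as $-f_2(x;\varepsilon)\sin^2\theta$, not $+f_2(x;\varepsilon)\sin^2\theta$ as printed in \eqref{3pc-c}, so your claim that the bookkeeping ``reproduces \eqref{3pc-c} directly'' is not literally true; the worked example \eqref{polar2}, where $f_2=-\varepsilon$ yields $+\varepsilon\sin^2\theta$, indicates that the sign in the lemma's displayed formula is a typo in the paper rather than a flaw in your argument.
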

\begin{proof}
	Direct calculations.
\end{proof}
Note that the vector field in \eqref{3pc} is periodic in $\theta$, with period $\pi$. Hence, the $\theta$-variable therein can be restricted to the interval $\bigg[-\dfrac{\pi}{2},\dfrac{\pi}{2}\bigg)$, with values outside that interval taken modulo $\pi$. In the following, we will denote the right-hand side in \eqref{3pc-c} by
\begin{align*}
\Phi(x,\theta,\varepsilon) := f_2(x;\varepsilon)\sin^2\theta+g_1(x;\varepsilon)\cos^2\theta+\lp g_2(x;\varepsilon)-f_1(x;\varepsilon)\rp\cos\theta\sin\theta, 
\end{align*}
for which, for future reference, we have
\begin{align}
\begin{aligned}
\frac{\partial \Phi}{\partial \theta}(x,\theta,\varepsilon) =& \lp g_2(x;\varepsilon)-f_1(x;\varepsilon)\rp\cos^2\theta+2\lp f_2 (x;\varepsilon)-g_1 (x;\varepsilon)\rp\sin\theta\cos\theta\\
&+\lp f_1(x;\varepsilon)-g_2(x;\varepsilon)\rp\sin^2\theta.
\end{aligned}\eqlab{phix}
\end{align}
We observe that \eqref{3pc-a} and \eqref{3pc-c} are decoupled from \eqref{3pc-b}, and that the set $\{r=0\}$, corresponding to the critical manifold $\mc{C}_0$, is invariant. Since we are interested in delayed loss of stability along $\mc{C}_0$, we will hence restrict our analysis to $r=0$, and we will focus on the system
\begin{subequations}\eqlab{pc0}
	\begin{align}
	x' &= \varepsilon, \eqlab{pc-a}\\
	\theta' &= \Phi(x,\theta;\varepsilon), \eqlab{pc-b}
	\end{align}
\end{subequations}
{which} is a two-dimensional fast-slow system in the standard form of GSPT. In terms of the variables $(x,\theta)$, the critical manifold for \eqref{pc0} reads
\begin{align}
\mc{M}_0 = \lb (x,\theta)\in \mathbb{R}\times \lp\mb{R}\ \tn{mod}\ \pi\rp\ ~\lvert~ \Phi(x,\theta,0)=0\rb. \eqlab{crima}
\end{align}
\begin{lemma}
    The scalar problem \eqref{pc-b}$_{\varepsilon=0}$, given by $\theta' = \Phi(x,\theta;0)$, undergoes a transcritical bifurcation at $(x,\theta) = (x^*,\theta^*)$. 
    \lemmalab{tc}
\end{lemma}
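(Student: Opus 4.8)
The plan is to verify at $(x^*,\theta^*)$ the standard non-degeneracy conditions for a transcritical bifurcation of the scalar family $\theta'=\Phi(x,\theta;0)$, with $x$ playing the role of the bifurcation parameter, namely
\[
\Phi=\partial_\theta\Phi=\partial_x\Phi=0,\qquad \partial^2_{\theta\theta}\Phi\neq0,\qquad \big(\partial^2_{x\theta}\Phi\big)^2-\partial^2_{xx}\Phi\,\partial^2_{\theta\theta}\Phi>0,
\]
all evaluated at $(x^*,\theta^*,0)$. First I would pin down $\theta^*$: since the zeros of $\Phi(x,\cdot\,;0)$ are precisely the angles of the eigendirections of $A(x;0)$, and since by the sixth item of \asuref{mus} the eigenvalue $\xi^*$ has geometric multiplicity $1$ (so $A(x^*;0)$ is an improper node, rather than a multiple of the identity), $\Phi(x^*,\cdot\,;0)$ is not identically zero and possesses exactly one zero modulo $\pi$; this zero is $\theta^*$, and $\Phi(x^*,\theta^*;0)=0$ holds by construction.

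For the remaining pointwise conditions I would exploit the elementary structure of $\Phi(x,\cdot\,;0)$. Writing it as a single harmonic, $\Phi(x,\theta;0)=c_0(x)+c_1(x)\cos2\theta+c_2(x)\sin2\theta$, or equivalently reducing $\Phi(x,\theta;0)=0$ to a quadratic in $\tan\theta$, one checks that its discriminant equals, up to a positive factor, the eigenvalue discriminant $(\operatorname{tr}A)^2-4\det A$. Hence $\Phi(x,\cdot\,;0)$ has a double zero exactly when the eigenvalues collide, i.e.\ only at $x=x^*$; the double zero at $\theta^*$ forces $\partial_\theta\Phi(x^*,\theta^*;0)=0$ (consistent with \eqref{phix}), while the fact that the harmonic amplitude $\sqrt{c_1^2+c_2^2}$ does not vanish (again because $\xi^*$ has geometric multiplicity $1$) gives $\partial^2_{\theta\theta}\Phi(x^*,\theta^*;0)\neq0$, so the zero is exactly double and a pitchfork is excluded. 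The condition $\partial_x\Phi(x^*,\theta^*;0)=0$ then comes for free: along any smooth branch $\theta=\Theta(x)$ of equilibria through $(x^*,\theta^*)$ one has $\Phi(x,\Theta(x);0)\equiv0$, and differentiating gives $\partial_x\Phi+\partial_\theta\Phi\,\Theta'=0$, which at $x^*$ reduces to $\partial_x\Phi(x^*,\theta^*;0)=0$ because $\partial_\theta\Phi$ already vanishes there.

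The main obstacle is the final transversality inequality, which is what distinguishes a genuine transcritical crossing from a saddle-node or a degenerate tangency. The clean way to obtain it is to track the two eigendirections through the collision: the smooth eigenvalue branches $\mu_1,\mu_2$ of \eqref{mus} carry two branches of equilibrium angles $\theta_{\mu_1}(x),\theta_{\mu_2}(x)$, each solving $\Phi(x,\theta;0)=0$, which coincide at $(x^*,\theta^*)$. I would show these branches are smooth across $x^*$ and cross transversally, i.e.\ $\theta_{\mu_1}'(x^*)\neq\theta_{\mu_2}'(x^*)$; substituting the two distinct slopes into $\Phi\approx\tfrac12\partial^2_{\theta\theta}\Phi\,(\theta-\theta_{\mu_1})(\theta-\theta_{\mu_2})$ yields exactly the required sign condition. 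The delicate point is that the angular splitting is governed by $\sqrt{(\operatorname{tr}A)^2-4\det A}$, which behaves like $|x-x^*|$ near the collision; smoothness of the individual branches $\theta_{\mu_i}$ is recovered only after pairing each eigenvalue $\mu_i$ with its own eigendirection, and transversality reduces to the non-degeneracy of the collision, i.e.\ to $\tfrac{d^2}{dx^2}\big[(\operatorname{tr}A)^2-4\det A\big](x^*)>0$ (equivalently, $\mu_1,\mu_2$ crossing with distinct slopes), which is the generic situation and is consistent with the monotonicity in the second item of \asuref{mus}. A short additional check is needed in the degenerate chart where $\theta^*=\pm\tfrac{\pi}{2}$ and $\tan\theta$ blows up; there I would repeat the quadratic reduction in $\cot\theta$ instead.
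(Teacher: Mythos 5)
Your proposal is correct in substance, but it takes a genuinely different---and noticeably more complete---route than the paper. The paper's proof consists of Taylor-expanding $\Phi(x,\theta;0)$ in powers of $\theta-\theta^*$ with $x$-dependent coefficients $T_1(x),T_2(x)$ as in \eqref{tcnf} and declaring the result to be the transcritical normal form; it does not verify $T_1(x^*)=0$, $T_1'(x^*)\neq0$ or $T_2(x^*)\neq0$, and it silently omits the constant term $T_0(x)=\Phi(x,\theta^*;0)$, which does not vanish identically in $x$ and whose second-order behaviour is precisely what the transversality determinant in the subsequent Corollary must control. You instead verify the standard defining and nondegeneracy conditions pointwise, and you supply the structural input the paper leaves implicit: writing $\Phi(x,\cdot\,;0)$ as a single harmonic in $2\theta$ (equivalently a quadratic in $\tan\theta$) whose discriminant agrees, up to a positive factor, with $\lp\tn{tr}\,A\rp^2-4\,\tn{det}\,A$ is exactly the right observation---it identifies the eigenvalue collision with the double zero in $\theta$, gives uniqueness of $\theta^*$ modulo $\pi$ and $\partial^2_{\theta\theta}\Phi\neq0$ from the geometric-multiplicity-$1$ hypothesis, yields $\partial_x\Phi=0$ by differentiating along an equilibrium branch, and reduces the Hessian inequality to the transversal crossing of the two angle branches via the factorisation $\Phi\approx\tfrac12\partial^2_{\theta\theta}\Phi\,(\theta-\theta_{\mu_1})(\theta-\theta_{\mu_2})$. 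What the paper's route buys is brevity and explicit coefficients $T_1,T_2$; what yours buys is an actual proof. Two remarks: first, the final transversality inequality (your condition that the discriminant of $A$ have a nondegenerate quadratic zero at $x^*$, equivalently that $\mu_1,\mu_2$ cross with distinct slopes) does not follow from \asuref{mus} as literally stated---it is an implicit genericity hypothesis, which you flag honestly and the paper does not; second, your discriminant identity holds for the correctly computed angular vector field, which differs from \eqref{3pc-c} by the sign of the $f_2\sin^2\theta$ term---a typo in the paper rather than a flaw in your argument.
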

\begin{proof}
Expanding the function $\Phi(x,\theta;0)$ about $\theta = \theta^*$ gives
\begin{align}
\Phi(x,\theta;0) = T_1(x)(\theta-\theta^*)+T_2(x)(\theta-\theta^*)^2+\dots, \eqlab{tcnf}
\end{align}
where the dots denote higher-order terms in $\theta$ and, moreover,
\begin{align*}
    T_1(x) :&= (g_2(x,0)-f_1(x,0))\left(\cos^2\theta^*-\sin ^2\theta^*\right) +2 \lp f_2(x,0)-g_1(x,0)\rp\sin \theta^* \cos \theta^*\quad\text{and} \\
    T_2(x) :&= \lp f_2(x,0) -g_1(x,0)\rp\left(\cos ^2\theta^*-\sin ^2\theta^*\right)-2 (g_2(x,0)-f_1(x,0))\sin \theta^* \cos \theta^* 
\end{align*}
The expression in \eqref{tcnf} is the normal form of a transcritical bifurcation.
\end{proof}

\lemmaref{tc} implies that the critical manifold $\mc{M}_0$ consists of two branches, $\mc{S}_0$ and $\mc{Z}_0$,
which intersect and exchange stability at $(x,\theta)=(x^*,\theta^*)$; cf. \figref{stabex}. Indeed,
for any fixed $ x $, the $ \theta$-roots of $ \Phi(x,\theta,0)=0 $ correspond to the angles of the eigenvectors of $ A(x;0) $ with the positive $x$-axis. Moreover, for $x\neq x^*$, the matrix $ A(x;0) $ has two distinct eigenvalues, and hence two distinct eigendirections. In terms of their angles $\theta$, the latter can be represented as graphs over $x$ in the $(x,\theta)$-plan, as a consequence of the implicit function theorem; by assumption, the two eigenvalues exist for every $x\in \mathbb{R}$, and so do their directions, represented by the angular coordinate $\theta$. We therefore denote by $\theta_{i}(x)$ the angle of the eigendirection associated with $\mu_{i}(x;0)$, where $i=1,2$; recall \eqref{mus}. Correspondingly, we define the branches of $\mc{M}_0 = \mc{S}_0\cup \mc{Z}_0$ by
\begin{align*}
    \mc{S}_0 : \begin{cases}
     \theta = \theta_{1}(x) &\tn{if} \quad x\neq x^* \\
     \theta =\theta^* &\tn{if} \quad x= x^*
    \end{cases}
    \quad\text{and}\quad
    \mc{Z}_0 : \begin{cases}
     \theta = \theta_{2}(x) &\tn{if} \quad x\neq x^* \\
     \theta =\theta^* &\tn{if} \quad x= x^*
    \end{cases}
    ,
\end{align*}
respectively.

\begin{figure}[H]\centering
	\begin{subfigure}[b]{0.45\textwidth}
	\centering
			\begin{tikzpicture}
	\node at (0,0){
		\includegraphics[width=0.9\textwidth]{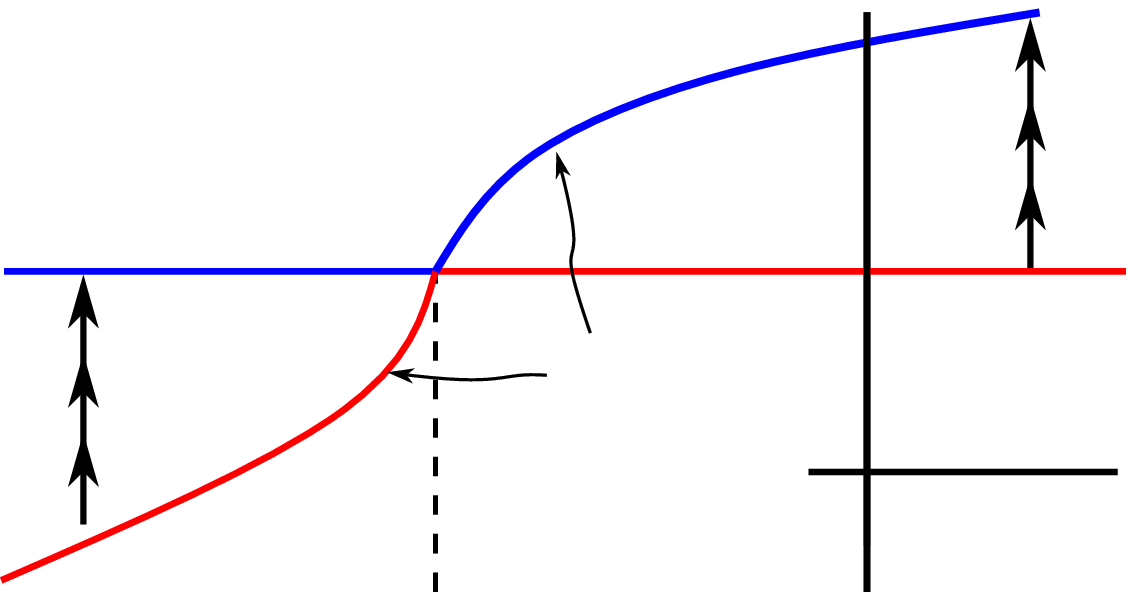}};
	\node at (3.5,-1) {$x$};
	\node at (1.8,2) {$\theta$};
	\node at (-0.75,-2) {$x=x^*$};
	\node at (-3,0.5) {$\mc{S}_0$};
	\node at (0.2,-0.5) {$\mc{Z}_0$};
	\end{tikzpicture}
		\caption{	\figlab{stabex_a}}
	\end{subfigure}
~
	\begin{subfigure}[b]{0.45\textwidth}
		\centering
			\begin{tikzpicture}
	\node at (0,0){
		\includegraphics[width=0.9\textwidth]{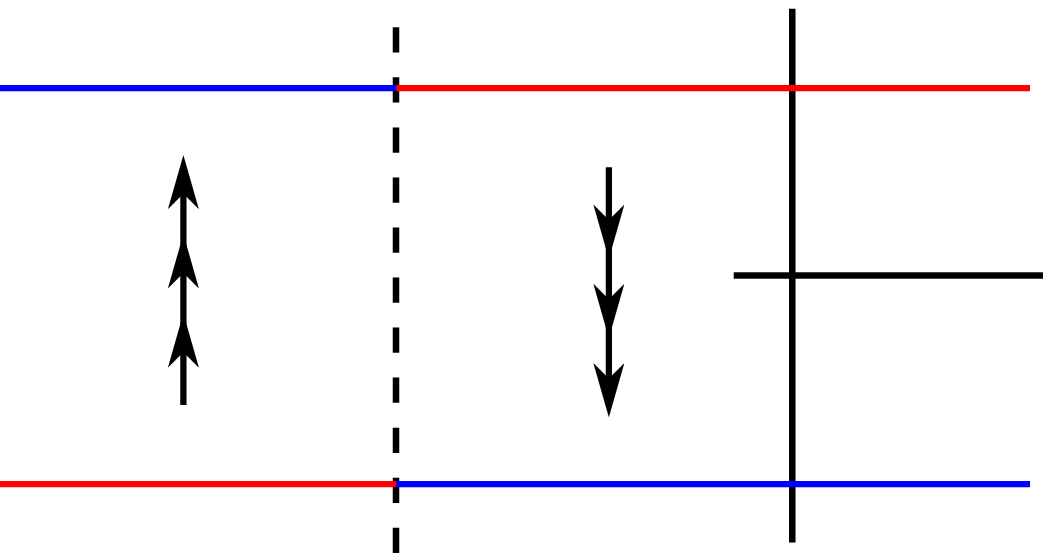}};
	\node at (-3,1.5) {$\mc{S}_0$};
	\node at (3.5,0) {$x$};
	\node at (1.8,2) {$\theta$};
	\node at (-0.75,-2) {$x=x^*$};
	\node at (-3,-1) {$\mc{Z}_0$};
	\end{tikzpicture}
		\caption{\figlab{stabex_b}}
	\end{subfigure}
	\caption{At $x=x^*$, the critical manifold $\mc{M}_0$ loses normal hyperbolicity. If the eigenvalue of the matrix $A(x^*;0)$ has geometric multiplicity $1$, then $\mc{M}_0$ has a self-intersection, as in panel (a). If the eigenvalue of the matrix $A(x^*;0)$ has geometric multiplicity $2$, then $\mc{M}_0$ consists of two disjoint branches, as in panel (b).}
	\figlab{stabex}
\end{figure}

Regarding the stability properties of $\mc{S}_0 $ and $\mc{Z}_0 $, we have the following result:
\begin{lemma}
	Fix $ x\neq x^* $. Then, for the scalar problem \eqref{pc-b}$_{\varepsilon=0} $, the branch $\mc{S}_0$ of the critical manifold $\mc{M}_0$ is attracting (repelling) if $x<x^*$ ($x>x^*$), whereas the branch $\mc{Z}_0$ is repelling (attracting) if $x<x^*$ ($x>x^*$).
	\lemmalab{stabeig}
\end{lemma}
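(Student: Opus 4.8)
The plan is to reduce the stability question to the sign of $\partial_\theta\Phi$ at the two equilibria $\theta_1(x)$ and $\theta_2(x)$, and then to show that this sign is controlled entirely by the ordering of the eigenvalues $\mu_1,\mu_2$. For the scalar fast equation \eqref{pc-b}$_{\varepsilon=0}$, a normally hyperbolic point of $\mc{M}_0$ is attracting precisely when $\partial_\theta\Phi(x,\theta;0)<0$ and repelling when $\partial_\theta\Phi(x,\theta;0)>0$, by the standard criterion for one-dimensional fast dynamics \cite{fenichel1979geometric}. Hence everything comes down to evaluating $\partial_\theta\Phi$ along $\mc{S}_0$ and $\mc{Z}_0$.

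The key identity I would establish first is
\[
\partial_\theta\Phi\big(x,\theta_i(x);0\big) = \mu_{3-i}(x;0)-\mu_i(x;0),\qquad i\in\{1,2\},
\]
i.e.\ the linearisation of the angular flow at the eigendirection of $\mu_i$ equals the other eigenvalue minus $\mu_i$. To prove this cleanly I would introduce the orthonormal frame $u(\theta)=(\cos\theta,\sin\theta)$, $u^{\perp}(\theta)=(-\sin\theta,\cos\theta)$, and observe that the radial rate in \eqref{3pc-b} is $R(x,\theta):=\langle A(x;0)u,u\rangle$ (so $r'/r=R$) while the angular velocity in \eqref{3pc-c} is $\Phi(x,\theta;0)=\langle A(x;0)u,u^{\perp}\rangle$. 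Differentiating $\Phi$ in $\theta$ with $u'=u^{\perp}$ and $(u^{\perp})'=-u$ gives $\partial_\theta\Phi=\langle Au^{\perp},u^{\perp}\rangle-\langle Au,u\rangle = R(x,\theta+\tfrac\pi2)-R(x,\theta)$. Since $\{u,u^{\perp}\}$ is orthonormal, $R(x,\theta)+R(x,\theta+\tfrac\pi2)=\tn{tr}\,A(x;0)=\mu_1+\mu_2$ for every $\theta$; and along an eigendirection the Rayleigh rate satisfies $R(x,\theta_i)=\mu_i$. Combining these two facts yields $R(x,\theta_i+\tfrac\pi2)=\mu_{3-i}$ and hence the identity above.

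With the identity in hand, the conclusion is immediate from the definitions in \eqref{mus} together with the fact that $\xi^+(x;0)>\xi^-(x;0)$ strictly for $x\neq x^*$, since the discriminant in \eqref{eigs} is positive away from the unique collision point $x^*$. Along $\mc{S}_0$ (the branch $\theta_1$, associated with $\mu_1$) we get $\partial_\theta\Phi=\mu_2-\mu_1$: for $x<x^*$ this equals $\xi^--\xi^+<0$ (attracting), while for $x>x^*$ it equals $\xi^+-\xi^->0$ (repelling). Along $\mc{Z}_0$ (the branch $\theta_2$, associated with $\mu_2$) we get $\partial_\theta\Phi=\mu_1-\mu_2$ with the opposite signs, so $\mc{Z}_0$ is repelling for $x<x^*$ and attracting for $x>x^*$. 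This is exactly the claimed stability pattern.

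I expect the only real subtlety---rather than a genuine obstacle---to lie in Step~2: matching the orthonormal-frame expression for $\Phi$ with \eqref{3pc-c} and verifying that $R(x,\theta_i)=\mu_i$ is read off at the correct eigendirection. The normal hyperbolicity needed to invoke the stability criterion fails exactly at $x=x^*$, where $\mu_1=\mu_2$ forces $\partial_\theta\Phi=0$; this is consistent with the loss of normal hyperbolicity at the transcritical point of \lemmaref{tc}, and is why the statement is restricted to $x\neq x^*$.
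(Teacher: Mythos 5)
Your proof is correct, and it reaches the same pivotal quantity as the paper --- the sign of $\partial_\theta\Phi$ at the two equilibrium branches, which both arguments identify as $\mu_2-\mu_1$ along $\mc{S}_0$ and $\mu_1-\mu_2$ along $\mc{Z}_0$ --- but by a genuinely different computation. The paper diagonalises $A(x;0)$ for $x\neq x^*$, rewrites the angular equation in the eigenframe where the eigendirections sit at $\omega_1=0$ and $\omega_2=\pi/2$, and reads off $\partial_\omega\Phi(x,\omega_i,0)=\pm\lp\mu_2(x;0)-\mu_1(x;0)\rp$ there; it leaves implicit the (standard but nontrivial) fact that the diagonalising map is generally not a rotation, so the induced change of angular variable is a circle diffeomorphism under which only the invariance of the linearisation at a fixed point guarantees that the sign of $\partial_\omega\Phi$ at $\omega_i$ equals that of $\partial_\theta\Phi$ at $\theta_i(x)$. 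Your route avoids this entirely: the identity $\partial_\theta\Phi\lp x,\theta;0\rp=\langle Au^{\perp},u^{\perp}\rangle-\langle Au,u\rangle$, combined with $\langle Au,u\rangle+\langle Au^{\perp},u^{\perp}\rangle=\tn{tr}\,A=\mu_1+\mu_2$ and the Rayleigh evaluation $\langle Au,u\rangle=\mu_i$ at a unit eigenvector, gives the exact, coordinate-free value $\partial_\theta\Phi\lp x,\theta_i(x);0\rp=\mu_{3-i}(x;0)-\mu_i(x;0)$ directly in the original angular coordinate, and the stated stability pattern then follows from \eqref{mus} and the strict gap $\xi^+>\xi^-$ for $x\neq x^*$. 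One caveat on the ``matching'' step you flag: your frame formula corresponds to $\theta'=g_1\cos^2\theta-f_2\sin^2\theta+\lp g_2-f_1\rp\cos\theta\sin\theta$, which is the correct polar form of \eqref{zsub}; the sign of the $f_2\sin^2\theta$ term in \eqref{3pc-c} as printed does not agree with this (it is immaterial for the paper's own proof, since $f_2=0$ in the diagonal frame), so you should match against the direct derivation of $\theta'=(z_1z_2'-z_2z_1')/r^2$ rather than against the displayed formula.
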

\begin{proof}
	For any $ x\neq x^* $, equation~\eqref{amat} is diagonalisable, and there exists a change of coordinates such that \eqref{zsub} can be written as
	\begin{align}
	\begin{pmatrix}
	\zeta_1 \\ \zeta_2 
	\end{pmatrix}'
	=\begin{pmatrix}
	\mu_1(x;0) & 0 \\
	0 &	\mu_2(x;0) 
	\end{pmatrix}\begin{pmatrix}
	\zeta_1 \\ \zeta_2 
	\end{pmatrix}. \eqlab{diaged}
	\end{align}
In these coordinates, the eigenvector associated with $ \mu_1 $ is $ (1,0) $, with angle $ \omega_1 = 0 $, corresponding to $ \theta_1 $ in the original coordinates; similarly, the eigenvector associated with $ \mu_2 $ is $ (0,1) $, with angle $ \omega_2 = \frac{\pi}{2} $, corresponding to $ \theta_2 $.

In the notation of \eqref{zsub} and \eqref{amat}, for \eqref{diaged} we have
\begin{align*}
f_1(x;0) = \mu_1(x;0), \quad f_2(x;0) =0, \quad g_1(x;0) = 0, \quad\text{and}\quad g_2(x;0) = \mu_2(x;0). 
\end{align*}
In polar coordinates, it follows that
\begin{subequations}
\begin{align*}
\Phi(x,\omega,0) &= \lp \mu_2(x;0)-\mu_1(x;0)\rp\cos\omega\sin\omega, \\
\frac{\partial \Phi}{\partial \omega}(x,\omega,0) &= \lp \mu_2(x;0)-\mu_1(x;0)\rp\cos^2\omega+\lp \mu_1(x;0)-\mu_2(x;0)\rp\sin^2\omega.
\end{align*}
\end{subequations}
It is therefore apparent that $ \frac{\partial \Phi}{\partial \omega}(x,\omega_1,0)<0 $ and $\frac{\partial \Phi}{\partial \omega}(x,\omega_2,0)>0 $ when $x<x^*$, whereas $ \frac{\partial \Phi}{\partial \omega}(x,\omega_1,0)>0 $ and $\frac{\partial \Phi}{\partial \omega}(x,\omega_2,0)<0 $ for $x>x^*$, which gives the result.
\end{proof}
In summary, we may therefore write $\mc{S}_0 = \mc{S}_0^a\cup\lb (x^*,\theta^*)\rb\cup\mc{S}_0^r$ and $\mc{Z}_0 = \mc{Z}_0^r\cup\lb (x^*,\theta^*)\rb\cup\mc{Z}_0^a$, where
\begin{subequations}\eqlab{branchez}
\begin{gather}
    \mc{S}_0^a = \lb (x,\theta)\in \mc{S}_0 ~\lvert~ x<x^* \rb, \quad
    \mc{S}_0^r = \lb (x,\theta)\in \mc{S}_0 ~\lvert~ x>x^* \rb, \\
    \mc{Z}_0^r = \lb (x,\theta)\in \mc{Z}_0 ~\lvert~ x<x^* \rb, \quad\text{and}\quad
    \mc{Z}_0^a = \lb (x,\theta)\in \mc{Z}_0 ~\lvert~ x>x^* \rb.
\end{gather}
\end{subequations}

Finally, for future reference, we highlight some of the properties of the function $\Phi(x,\theta,\varepsilon)$ which follow from the transcritical bifurcation at $(x,\theta,\varepsilon)=(x^*,\theta^*,0)$ in \eqref{pc0}:
\begin{cor} Recall the definition of \eqref{pc0} and \lemmaref{tc}. Then, given \asuref{mus}, the following relations hold: 
\begin{subequations}
\begin{align}
\frac{\partial \Phi}{\partial \theta} (x^*, \theta^*;0) &= 0,\\
\frac{\partial \Phi}{\partial x} (x^*, \theta^*;0) &= 0,\\
\frac{\partial^2 \Phi}{\partial \theta^2} (x^*, \theta^*;0) &\neq 0,\\
\dfrac{\partial^2 \Phi}{\partial x \partial \theta}\lp x^*, \theta^*, 0\rp &\neq0,\quad\text{and} \\
\left|\begin{matrix}
\dfrac{\partial^2 \Phi}{\partial \theta^2}\lp x^*, \theta^*, 0\rp & \dfrac{\partial^2 \Phi}{\partial x \partial \theta}\lp x^*, \theta^*, 0\rp\\
\dfrac{\partial^2 \Phi}{\partial x \partial \theta}\lp x^*, \theta^*, 0\rp & \dfrac{\partial^2 \Phi}{\partial x^2}\lp x^*, \theta^*, 0\rp
\end{matrix}\right| &<0.
\eqlab{phipart}
\end{align} 
\end{subequations}
\end{cor}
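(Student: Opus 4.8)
The plan is to read off all five relations from the transcritical normal form of \lemmaref{tc} together with the eigenvalue/eigendirection picture encoded in \asuref{mus}. Write, as in \eqref{tcnf},
$\Phi(x,\theta;0)=T_1(x)(\theta-\theta^*)+T_2(x)(\theta-\theta^*)^2+O\big((\theta-\theta^*)^3\big)$.
The absence of a $\theta$-independent term here records that $\theta=\theta^*$ is an equilibrium of \eqref{pc-b}$_{\varepsilon=0}$ for \emph{every} $x$, i.e. $\Phi(x,\theta^*;0)\equiv0$. Differentiating this identity once and twice in $x$ immediately gives $\frac{\partial\Phi}{\partial x}(x^*,\theta^*;0)=0$ and $\frac{\partial^2\Phi}{\partial x^2}(x^*,\theta^*;0)=0$, so the second relation comes essentially for free and, as I explain below, the determinant in \eqref{phipart} will collapse to a perfect square.

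First I would establish the two $\theta$-relations. Comparing coefficients in the expansion gives $\frac{\partial\Phi}{\partial\theta}(x^*,\theta^*;0)=T_1(x^*)$ and $\frac{\partial^2\Phi}{\partial\theta^2}(x^*,\theta^*;0)=2T_2(x^*)$. Since for each fixed $x$ the zeros (mod $\pi$) of $\theta\mapsto\Phi(x,\theta;0)$ are exactly the eigendirection angles of $A(x;0)$ — and, as a quadratic in $\tan\theta$, there are at most two of them — the collision of the two eigendirections at $x=x^*$ forces $\theta^*$ to be a root of multiplicity exactly two: at least two because the two simple roots present for $x\neq x^*$ merge there, and exactly two because geometric multiplicity one (item~6 of \asuref{mus}) rules out the degenerate case $\Phi(x^*,\cdot;0)\equiv0$. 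A double root yields $T_1(x^*)=0$ and $T_2(x^*)\neq0$, which are precisely the first and third relations.

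The crux is the transversality relation $\frac{\partial^2\Phi}{\partial x\,\partial\theta}(x^*,\theta^*;0)=T_1'(x^*)\neq0$. The key observation is that $T_1(x)=\frac{\partial\Phi}{\partial\theta}(x,\theta^*;0)$ is the linearisation of the scalar flow at the persistent equilibrium $\theta^*$, and hence, up to a nonzero smooth factor, equals the eigenvalue splitting $\mu_2(x;0)-\mu_1(x;0)$: evaluating the diagonalised expression for $\frac{\partial\Phi}{\partial\theta}$ computed in the proof of \lemmaref{stabeig} at the eigendirection $\theta^*$ shows that $T_1$ measures exactly how far apart the two eigenvalues are. Since these eigenvalues are real and non-decreasing (item~2) and coincide only at the unique point $x^*$ (item~4), their difference changes sign cleanly at $x^*$. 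I expect this to be the main obstacle, because it is here that one must convert the qualitative ``intersecting eigenvalues'' hypothesis into the quantitative statement that $\mu_1$ and $\mu_2$ cross \emph{transversally} rather than tangentially. I would make it rigorous by expanding $\xi^\pm$ via \eqref{eigs} and using that $\big(\operatorname{tr}A\big)^2-4\det A$ has a genuine (order-two) zero at $x^*$, so that $\mu_2-\mu_1$ vanishes to exactly first order and $T_1'(x^*)\neq0$ follows.

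Finally, the determinant relation follows by assembling the pieces. Having already shown $\frac{\partial^2\Phi}{\partial x^2}(x^*,\theta^*;0)=0$, the matrix in \eqref{phipart} takes the form $\left(\begin{smallmatrix} 2T_2(x^*) & T_1'(x^*)\\ T_1'(x^*) & 0\end{smallmatrix}\right)$, whose determinant is $-\big(T_1'(x^*)\big)^2=-\big(\frac{\partial^2\Phi}{\partial x\,\partial\theta}(x^*,\theta^*;0)\big)^2$; this is strictly negative precisely by the transversality relation just proved. Equivalently, a negative determinant is the statement that the branches $\mc{S}_0$ and $\mc{Z}_0$ of $\mc{M}_0$ meet at $(x^*,\theta^*)$ with two distinct real slopes, which is guaranteed by the existence of two distinct real eigendirections for $x\neq x^*$, closing the argument.
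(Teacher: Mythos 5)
The paper's own proof of this corollary is a one-line appeal to \lemmaref{tc}: the five relations are precisely the defining non-degeneracy conditions of the transcritical singularity asserted there. Your attempt to actually derive them is more ambitious, and the $\theta$-part is sound: writing $\Phi(x^*,\cdot\,;0)$ as a trigonometric polynomial with at most two roots modulo $\pi$ (counted with multiplicity, unless it vanishes identically, which item~6 of \asuref{mus} excludes), the merging of the two simple eigendirection roots forces a root of multiplicity exactly two at $\theta^*$, giving the first and third relations.

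The genuine gap is the claim $\Phi(x,\theta^*;0)\equiv0$, read off from the absence of a constant term in \eqref{tcnf}. That identity says the angle $\theta^*$ persists as an equilibrium of the $\theta$-equation for \emph{every} $x$; it holds in the paper's examples, where one branch of $\mc{M}_0$ is the horizontal line $\theta=-\pi/2$, but not in general, since both eigendirections of $A(x;0)$ may rotate with $x$. (The missing term $T_0(x)=\Phi(x,\theta^*;0)$ in \eqref{tcnf} vanishes at $x=x^*$ but need not vanish identically.) For instance, $A(x)=\left(\begin{smallmatrix}x-1&1\\x^2&x-1\end{smallmatrix}\right)$ satisfies \asuref{mus} near $x^*=0$ (eigenvalues $x-1\pm|x|$, Jordan block at $x=0$, eigendirections $\tan\theta=\pm x$, $\theta^*=0$), yet $\Phi(x,0;0)=g_1(x)=x^2$, so $\gamma=\tfrac12\partial_x^2\Phi(0,0;0)=1\neq0$; if $\gamma$ vanished always, the square root $\sqrt{\beta^2-\gamma\alpha}$ in \eqref{lam} would be pointless. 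Consequently your evaluation of the determinant in \eqref{phipart} as $-\bigl(\partial^2_{x\theta}\Phi(x^*,\theta^*,0)\bigr)^2$ collapses: the determinant is $4(\alpha\gamma-\beta^2)$, and its negativity must be argued the way you only gesture at in your closing sentence --- the branches $\mc{S}_0$ and $\mc{Z}_0$ cross $(x^*,\theta^*)$ with two distinct real slopes, so the quadratic part of $\Phi$ there is a non-degenerate indefinite form --- not deduced from $\gamma=0$. (The second relation $\partial_x\Phi(x^*,\theta^*;0)=0$ does survive, but for a different reason: the zero set of $\Phi(\cdot,\cdot\,;0)$ self-intersects at $(x^*,\theta^*)$, so the gradient must vanish there.) Relatedly, your transversality step for $\beta\neq0$ quietly assumes that $(\operatorname{tr}A)^2-4\det A$ vanishes to order exactly two at $x^*$; like the determinant condition itself, this is a genericity hypothesis implicit in \lemmaref{tc} rather than a consequence of \asuref{mus}, and should be flagged as such rather than derived.
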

\begin{proof}
	The statements follow from the fact that \eqref{pc-b}$_\varepsilon=0$ undergoes a transcritical bifurcation at $(x,\theta) = (x^*,\theta^*)$; cf. \lemmaref{tc}.
\end{proof}


\subsection{Main result}
We now present our main result. Throughout, we assume that a trajectory of the original system, equation~\eqref{gen}, enters a $\delta$-neigbourhood of $\mc{C}_0$, say a cylinder $B_\delta$ of radius $\delta$ around $\mc{C}_0$, with $\delta>0$ small, at a point with $x=x_0$. Given that the eigenvalues of the linearisation about $\mc{C}_0$ behave in accordance with \asuref{mus}, our aim is to find formulae that indicate how the accumulated contraction to, and expansion from, $\mc{C}_0$ can be balanced in order to calculate the exit coordinate $x_1$ at which the aforementioned trajectory exits the $\delta$-cylinder $B_\delta$ about $\mc{C}_0$. 
\begin{theorem}\thmlab{thrm}
Assume that \asuref{mus} holds for equation~\eqref{genmat}, and that a trajectory of \eqref{gen} enters a $\delta$-cylinder $B_\delta$ about $\mc{C}_0$, for $\delta>0$ sufficiently small, at some point with $x = x_0<x^*$. Denote
\begin{gather}
\begin{gathered}
\alpha := \frac{1}{2}\dfrac{\partial^2 \Phi}{\partial \theta^2}\lp x^*, \theta^*, 0\rp,\quad
\beta := \frac{1}{2}\dfrac{\partial^2 \Phi}{\partial x \partial \theta}\lp x^*, \theta^*, 0\rp,\\
\gamma := \frac{1}{2}\dfrac{\partial^2 \Phi}{\partial x^2}\lp x^*, \theta^*, 0\rp,\quad\text{and}\quad
\delta := \frac{1}{2}\dfrac{\partial \Phi}{\partial \varepsilon}\lp x^*, \theta^*, 0\rp, 
\end{gathered}\eqlab{kscoef}
\end{gather}
as well as
\begin{align}
\lambda := \dfrac{\delta\alpha +\beta}{\sqrt{\beta^2-\gamma\alpha}}. \eqlab{lam}
\end{align}
\begin{enumerate}
	\item If $ \lambda \neq 1 $, or if $ \lambda =1 $ and $\mc{Z}_0$ is invariant for \eqref{pc0} with $\varepsilon>0$, then the given trajectory exits $B_\delta$ at a point with $ x=x_1 +o(1)$, where $x_1$ is obtained by solving
	\begin{align}
	\int_{x_{0}}^{x^{*}} \mu_1(x) \tn{d}x +\int_{x^{*}}^{x_{1}} \mu_2(x) \tn{d}x =0. \eqlab{trans}
	\end{align}
	
	\item If $ \lambda = 1 $ and $\mc{S}_0$ is invariant for \eqref{pc0} with $\varepsilon>0$, then the trajectory exits $B_\delta$ at a point with $ x=x_1 + o(1)$, where $x_1$ is obtained by solving
	\begin{align}
	\int_{x_{0}}^{\tilde{x}} \mu_1(x) \tn{d}x  + \int_{\tilde{x}}^{x_1} \mu_2(x) \tn{d}x =0, \eqlab{invar}
	\end{align}
	and where, moreover, $ \tilde{x} $ is found by solving
	\begin{align}
		\int_{x_{0}}^{\tilde{x}}  \dfrac{\partial \Phi}{\partial \theta}\lp x, \theta_1(x), 0\rp \tn{d}x=0. \eqlab{xtil}
	\end{align}
\end{enumerate}
\end{theorem}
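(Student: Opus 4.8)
The plan is to push everything through the planar reduction \eqref{pc0} furnished by \lemmaref{polars}, by tracking which eigendirection a trajectory follows as a branch of $\mc{M}_0$ and converting branch-following into an accumulated radial rate. The key observation is that, along an eigendirection, the radial equation \eqref{3pc-b} collapses: if $(\cos\theta,\sin\theta)$ is an eigenvector of $A(x;\varepsilon)$ with eigenvalue $\mu$, substituting it into \eqref{3pc-b} gives $r'=\mu r$, whence $\frac{\tn{d}}{\tn{d}x}\ln r=\mu/\varepsilon$ along $\theta=\theta_i(x)$, with $\mu=\mu_i(x;0)$ to leading order. A trajectory entering $B_\delta$ at $r=\delta$ therefore exits when $\frac1\varepsilon\int\mu_{\tn{eff}}(x;0)\,\tn{d}x$ first returns to $0$, where $\mu_{\tn{eff}}$ is the eigenvalue attached to whichever branch is currently tracked. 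Thus the entire problem reduces to determining the trajectory's \emph{itinerary} among the branches of $\mc{M}_0$, and \eqref{trans}, \eqref{invar} are just this return condition for the corresponding choice of $\mu_{\tn{eff}}$. The whole argument is carried out to leading order on the invariant set $\{r=0\}$; the $O(\delta)$ and $O(\varepsilon)$ corrections to the $\theta$-dynamics off $\{r=0\}$ and to the radial balance are absorbed into the $o(1)$ term of the exit coordinate.

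First I would treat the entry phase: for $x\in[x_0,x^*-\rho]$ with $\rho>0$ fixed, $\mc{S}_0^a$ is normally hyperbolic and attracting by \lemmaref{stabeig}, so Fenichel theory \cite{fenichel1979geometric} produces a slow manifold $O(\varepsilon)$-close to $\mc{S}_0^a$ to which the trajectory is exponentially attracted; it therefore tracks the $\mu_1$-eigendirection, accumulating at the rate $\mu_1(x;0)$, and by \asuref{mus}(5) it reaches a neighbourhood of $(x^*,\theta^*)$ still pinned near $\mc{S}_0$ (contraction not yet balanced). The passage through the singularity is the crux. By \lemmaref{tc} the point $(x^*,\theta^*)$ is transcritical, with the non-degeneracy recorded in \eqref{phipart}, in particular $\beta^2-\gamma\alpha>0$ so that $\lambda$ in \eqref{lam} is well defined. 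I would then invoke the blow-up analysis of \cite{krupa2001trans}: the coefficients \eqref{kscoef} are precisely the quadratic and $\varepsilon$-linear normal-form data there, and $\lambda$ is the ratio that selects the continuation of the attracting slow manifold. I expect that for $\lambda\neq1$ the transition map sends the incoming manifold into an $O(\varepsilon)$-neighbourhood of the outgoing attracting branch $\mc{Z}_0^a$ (ordinary exchange of stability), so that for $x>x^*$ the trajectory tracks the $\mu_2$-eigendirection; whereas at the resonant value $\lambda=1$ the leading-order transition degenerates and the continuation is no longer forced, the two alternatives being distinguished by which branch is exactly invariant for \eqref{pc0} with $\varepsilon>0$.

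The two cases of the statement then follow by bookkeeping. In Case 1 ($\lambda\neq1$, or $\lambda=1$ with $\mc{Z}_0$ invariant) the trajectory switches to $\mc{Z}_0^a$ at $x=x^*+o(1)$, so $\mu_{\tn{eff}}=\mu_1$ on $[x_0,x^*]$ and $\mu_{\tn{eff}}=\mu_2$ on $[x^*,x_1]$, and the radial return condition is exactly \eqref{trans}. In Case 2 ($\lambda=1$ with $\mc{S}_0$ invariant) the trajectory does not leave $\mc{S}_0$ at $x^*$ but follows it as a canard onto $\mc{S}_0^r$; since $\mc{S}_0$ is invariant for $\varepsilon>0$, the transverse ($\theta$-)dynamics along it form, by \lemmaref{stabeig}, a one-dimensional delayed-loss-of-stability problem of Schecter type \cite{schecter1985persistent}, the angular deviation evolving at the rate $\partial_\theta\Phi(x,\theta_1(x),0)$ (negative for $x<x^*$, positive for $x>x^*$). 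The canard persists until angular contraction and expansion balance, i.e. until $x=\tilde x$ with $\int_{x_0}^{\tilde x}\partial_\theta\Phi(x,\theta_1(x),0)\,\tn{d}x=0$, which is \eqref{xtil}; there the trajectory is expelled from $\mc{S}_0$ and captured by $\mc{Z}_0^a$. Hence $\mu_{\tn{eff}}=\mu_1$ on $[x_0,\tilde x]$ and $\mu_{\tn{eff}}=\mu_2$ on $[\tilde x,x_1]$, and the return condition becomes \eqref{invar}.

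The hard part will be the passage through the transcritical singularity and, above all, the dichotomy at $\lambda=1$: I must show that for $\lambda\neq1$ the exchange of stability holds uniformly, so that the memory of the incoming manifold does not manufacture a spurious canard, and that at $\lambda=1$ it is exactly the invariance of $\mc{S}_0$ that unlocks the canard extension along $\mc{S}_0^r$ and fixes the angular exit $\tilde x$ via the secondary entry--exit relation \eqref{xtil}. Matching the blow-up charts of \cite{krupa2001trans} to the Schecter-type invariant-manifold continuation of \cite{schecter1985persistent}, and controlling the $O(\delta)$ coupling back into the radial equation so that every discrepancy is genuinely $o(1)$, is where the real work lies.
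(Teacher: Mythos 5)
Your proposal follows essentially the same route as the paper's proof: the polar reduction of \lemmaref{polars}, Fenichel tracking of $\mc{S}_0^a$ during the entry phase, the Krupa--Szmolyan transcritical analysis at $(x^*,\theta^*)$ when $\lambda\neq1$, and the Schecter-type delay along the invariant branch yielding $\tilde{x}$ via \eqref{xtil} when $\lambda=1$ and $\mc{S}_0$ is invariant, all converted into the radial balance conditions \eqref{trans} and \eqref{invar}; your explicit justification that tracking $\theta=\theta_i(x)$ forces $r'=\mu_i r$ is in fact spelled out more carefully than in the paper. The one ingredient you are missing is the subcase $\lambda<1$: in the standard transcritical scenario of \cite{krupa2001extending,krupa2001trans}, trajectories with $\lambda<1$ do \emph{not} undergo exchange of stability but are repelled in the fast direction, so your claim that for all $\lambda\neq1$ the transition map lands on $\mc{Z}_0^a$ by ``ordinary exchange of stability'' is not correct as stated. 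The paper resolves this with the observation that $\theta$ is identified modulo $\pi$, so the ``jump away'' in the fast $\theta$-direction wraps around and is captured by the attracting branch $\mc{Z}_0^a$ anyway; you flag uniformity in $\lambda$ as the hard part but do not supply this periodicity argument, which is precisely what makes Case 1 hold regardless of whether $\lambda<1$ or $\lambda>1$.
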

\begin{proof}
By \lemmaref{polars}, equation~\eqref{gen} gives a fast-slow system in polar coordinates of the form in \eqref{pc0}. The latter has a critical manifold $ \mc{M}_0 = \mc{S}_0\cup \mc{Z}_0 $, given by \eqref{crima}, where  $\mc{S}_0 = \mc{S}_0^a\cup\lb (x^*,\theta^*)\rb\cup\mc{S}_0^r$ and $\mc{Z}_0 = \mc{Z}_0^r\cup\lb (x^*,\theta^*)\rb\cup\mc{Z}_0^a$; recall \eqref{branchez}. The branches $\mc{S}_0$ and $\mc{Z}_0$ intersect transversely and exchange stability at $ (x^*,\theta^*) $; recall \lemmaref{stabeig} and see \figref{selfint} for an illustration.
\begin{enumerate}
	\item If $ \lambda\neq 1 $, then by \cite{krupa2001extending}, \eqref{pc0} can be locally written in the form of the transcritical singularity studied therein. Hence, for $ \varepsilon>0 $ sufficiently small, a trajectory of \eqref{pc0} that follows the attracting slow branch $ \mc{S}^a_\varepsilon $ will follow the attracting slow branch $ \mc{Z}^a_\varepsilon $ after crossing $ x=x^* $ regardless of whether $\lambda<1$ or $\lambda>1$, due to the equivalence relation implied by \eqref{crima}. If, on the other hand, $ \lambda =1 $ and $ \mc{Z}_0 $ is invariant for \eqref{pc0} with $\varepsilon>0$ small, then trajectories will again follow the attracting branch $ \mc{S}^a_\varepsilon $ for $x<x^*$ and the invariant attracting branch $ \mc{Z}^a_0 $ for $x>x^*$.
	
	The above implies that, in the original system in \eqref{gen} with $ \varepsilon>0 $, contraction is accumulated in the eigendirection of $ \theta_1(x) $ for $ x<x^* $, while contraction and expansion are accumulated in the eigendirection of $ \theta_2(x) $ for $ x>x^* $. The total contraction and expansion are therefore balanced in accordance with the entry-exit formula in \eqref{trans}.
	
	\item If $ \lambda =1 $ and $ \mc{S}_0 $ is invariant for \eqref{pc0} with $\varepsilon>0$ small, then for $ \varepsilon>0 $ sufficiently small, a trajectory of \eqref{pc0} that follows the attracting branch $ \mc{S}^a_0 $ will experience bifurcation delay after crossing $ x=x^* $ and before ``jumping'' to follow an attracting slow branch $ \mc{Z}^a_\varepsilon $. The exit point $ \tilde{x} $ is calculated via \eqref{xtil} \cite{schecter1985persistent}.
	
	The above implies that, in \eqref{gen} with $ \varepsilon>0 $, contraction is accumulated in the eigendirection of $ \theta_1(x) $ for $ x<\tilde{x} $, while contraction and expansion are accumulated in the eigendirection of $ \theta_2(x) $ for $ x>\tilde{x} $. The total contraction and expansion are therefore balanced in accordance with the entry-exit formula in \eqref{invar}.
\end{enumerate}
\end{proof}

\begin{figure}[H]\centering
	\begin{subfigure}[b]{0.45\textwidth}
	\centering
			\begin{tikzpicture}
	\node at (0,0){
		\includegraphics[width=0.9\textwidth]{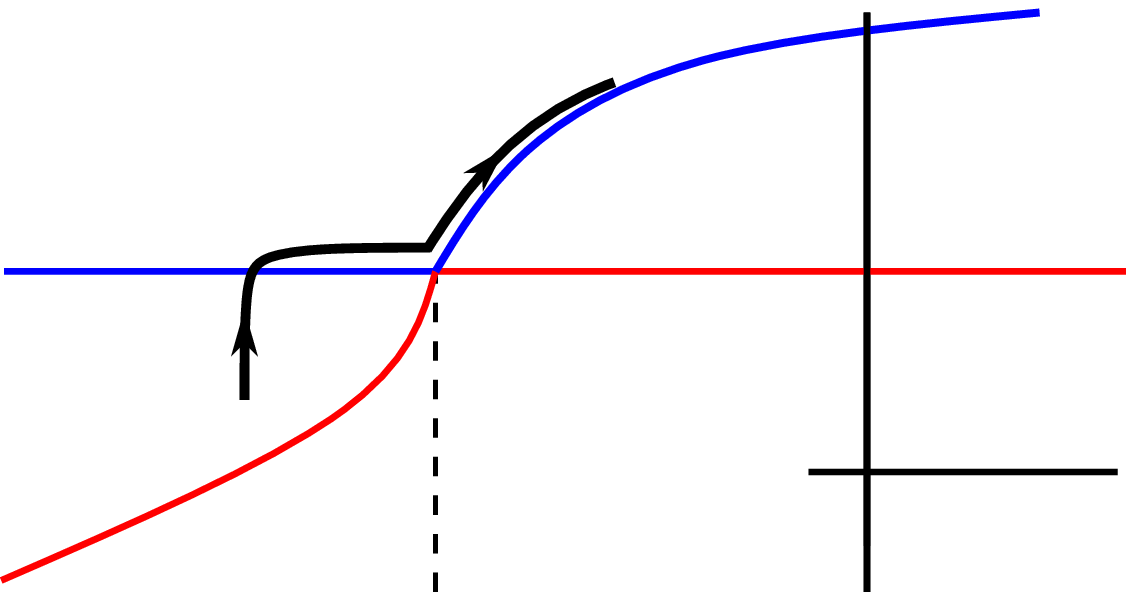}};
	
	\node at (3.5,-1) {$x$};
	\node at (-1.9,-0.75) {$x_0$};
	\node at (1.8,2) {$\theta$};
	\node at (-0.75,-2) {$x=x^*$};
	
	
	\node at (-3,0.5) {$\textcolor{blue}{\mc{S}_0^a}$};
	\node at (1.3,0.5) {$\textcolor{red}{\mc{S}_0^r}$};
	\node at (-2.5,-1.7) {$\textcolor{red}{\mc{Z}_0^r}$};
	\node at (1.3,1.8) {$\textcolor{blue}{\mc{Z}_0^a}$};
	\end{tikzpicture}
		\caption{$\lambda \neq 1$\figlab{selfint_a}}
	\end{subfigure}
	~
		\begin{subfigure}[b]{0.45\textwidth}	\centering
			\begin{tikzpicture}
	\node at (0,0){
		\includegraphics[width=0.9\textwidth]{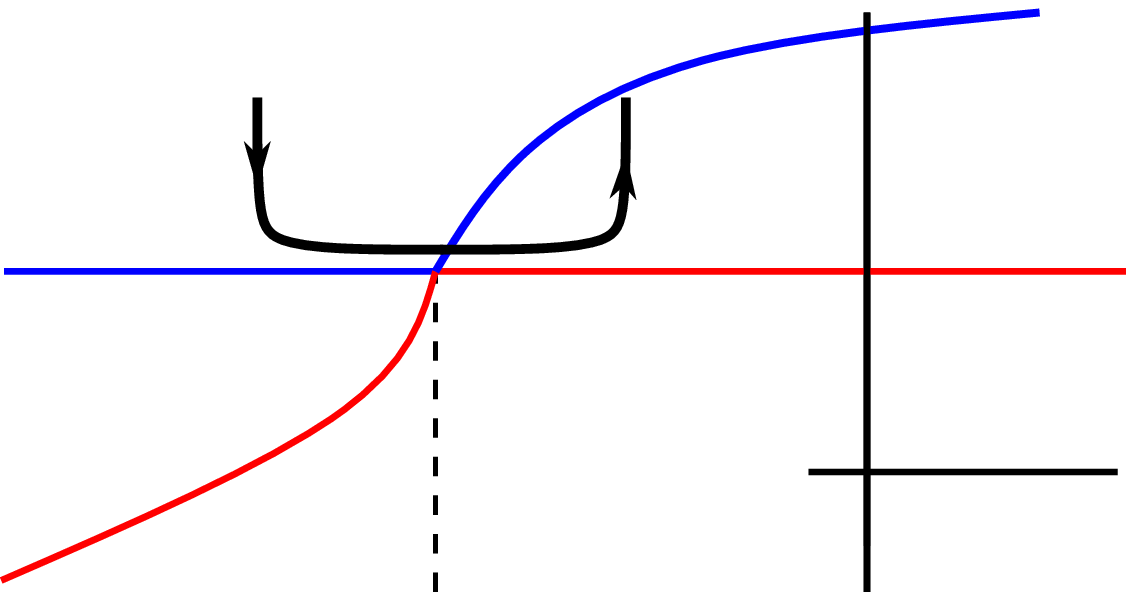}};
	
	\node at (3.5,-1) {$x$};
	\node at (-1.8,1.4) {$x_0$};
	\node at (0.3,1.5) {$\tilde{x}$};
	\node at (1.8,2) {$\theta$};
	\node at (-0.75,-2) {$x=x^*$};
	
    
    \node at (-3,0.5) {$\textcolor{blue}{\mc{S}_0^a}$};
	\node at (1.3,0.5) {$\textcolor{red}{\mc{S}_0^r}$};
	\node at (-2.5,-1.7) {$\textcolor{red}{\mc{Z}_0^r}$};
	\node at (1.3,1.8) {$\textcolor{blue}{\mc{Z}_0^a}$};
	\end{tikzpicture}
		\caption{$\lambda = 1$\figlab{selfint_b}}
	\end{subfigure}
	\caption{(a) Transcritical scenario from \cite{krupa2001extending}; (b) delayed loss of stability along an invariant portion of $\mc{M}_0$.}
	\figlab{selfint}
\end{figure}

It is therefore evident that the reformulation of \eqref{genmat} in polar coordinates, as given by \eqref{pc0}, is useful for identifying the eigendirection along which trajectories in a $\delta$-neighbourhood of $\mc{C}_0$ accumulate contraction or expansion, for various values of $x$. We have shown that, depending on the properties of the auxiliary system in \eqref{pc0}, we can distinguish between two cases: in the first case, trajectories of \eqref{genmat} switch the eigendirection they follow as soon as this eigendirection becomes repelling, as seen in \figref{selfint_a}; in the second case, trajectories exhibit entry-exit behaviour along the eigendirection they were initially attracted to, before being attracted to the other eigendirection, as shown in \figref{selfint_b}. This distinction indicates that the corresponding formulae for the accumulated contraction and expansion are given by \eqref{trans} and \eqref{invar}, respectively. Finally, we remark that, in general, due to the rotation of the linear subspaces of \eqref{zsub} along $\mc{C}_0$, as indicated by the given expressions for $\theta_{i}(x)$ ($i=1,2$), trajectories that enter the $\delta$-neighbourhood of $\mc{C}_0$ at some point with $z_1>0$ could potentially exit at some point with $z_1<0$; recall \figref{3dsf}. (A similar statement applies to the signs of $z_2$.)

\section{Examples}
\seclab{sys}
In this section, we present a number of examples that illustrate our main result, \thmref{thrm}.

\subsection{A one-way coupled system}
As our first example, we consider the system
\begin{subequations}\eqlab{syst1}
\begin{align}
    x'&=\varepsilon,\eqlab{syst1-a}\\
    z_1'&= x z_1,\eqlab{syst1-b}\\
    z_2'&= x z_1-z_2, \eqlab{syst1-c}
\end{align}
\end{subequations}
where we observe that the variables $ (x, z_1)$ are decoupled from $ z_2 $. The corresponding critical manifold for \eqref{syst1} is given by $\mc{C}_0=\{z_1=0=z_2\}$, {where the eigenvalues of the linearisation of the fast $(z_1,z_2)$-subsystem in \eqref{syst1} along $ \mc{C}_0 $ read} 
\begin{equation}\eqlab{ex1eigen}
\mu_1(x)=-1\quad  \tn{and} \quad \mu_2(x)= x.
\end{equation}
At $ x^* = -1 $, it holds that
\begin{align*}
\mu_1(x^*)=-1=\mu_2(x^*).
\end{align*} 
For $ x>-1 $, equation~\eqref{syst1} is diagonalisable, with one eigendirection that changes stability from stable to unstable, and an eigendirection that is always stable. The corresponding eigenvalues in these directions are $ \mu_1(x) $ and $ \mu_2(x) $, respectively. Therefore, standard theory on delayed loss of stability can be employed by considering only the eigendirection along which the stability changes \cite{de2008smoothness,de2016entry}, recall \eqref{wio-pl1}, and the entry-exit function is of the form
\begin{align*}
\int_{x_0}^{x_1} x \text{d}x = 0, \quad \text{which implies} \quad x_1 = - x_0
\end{align*}
for $ x_0\in(-1,0) $. In the following, we consider the case where $ x_0<-1 $. 

After transformation to polar coordinates, \eqref{syst1} reads
\begin{subequations}\eqlab{polar1}
	\begin{align}
	x' &= \varepsilon,\\
	\theta' &= x\cos^2\theta-(x+1)\sin\theta\cos\theta=:\Phi(x,\theta, \varepsilon)
	\end{align}
\end{subequations}
for $r=0$; cf.~\eqref{pc0}.
When $\varepsilon=0$, the critical manifold $ \mc{M}_0 $ for \eqref{polar1} is given by \eqref{crima}; in particular, it consists of two branches in this case. The first branch $\mc{S}_0$ is obtained from
\begin{equation}\eqlab{ex1S0}
\cos \theta = 0, \quad\text{which implies}\quad
\theta_1(x) = -\frac{\pi}{2},
\end{equation}
whereas the second branch $\mc{Z}_0$ is defined implicitly by
\[
x\cos\theta-(x+1)\sin\theta=0.
\]
These branches intersect at $ (x,\theta)=(-1,-\frac{\pi}{2}) $; see \figref{figure2} for an illustration. We note that the branch $\mc{Z}_0$ can be represented as
\begin{align}\eqlab{ex1Z0}
\theta_2(x) =
\begin{cases}
 \arctan\lp \frac{x}{x+1}\rp & x \neq -1,\\
 -\dfrac{\pi}{2} & x=-1,
\end{cases}
\end{align}
where the extension with continuity is a consequence of our identification of the angular variable modulo $\pi$. The explicit representation of $\theta$ as a function of $x$ naturally breaks down at $x=-1$ due to the fact that \eqref{polar1} undergoes a transcritical bifurcation at $(x,\theta)=(-1,-\frac{\pi}{2})$.

We emphasise that the branch $\mc{S}_0$ of $ \mc{M}_0 $ is invariant for equation~\eqref{polar1} with $ \varepsilon>0 $, and we reiterate that the angle $ \theta_1(x) $ corresponds to the eigenvector $(1,0)$ in $(z_1,z_2)$-coordinates, associated with the eigenvalue $\mu_1(x)=-1$, whereas the angle $ \theta_2(x) $ corresponds to the eigenvector $(x,x+1)$, associated with the eigenvalue $\mu_2(x)=x$. 

From \eqref{phix}, we obtain
\begin{equation}\eqlab{ex1deriv}
\dfrac{\partial \Phi}{\partial \theta}(x,\theta,0)=-(x+1)(\cos^2\theta-\sin^2\theta) -2x\cos\theta \sin \theta,
\end{equation}
which implies that the branch $\mc{S}_0$ is attracting for $x<-1$ and repelling if $x>-1$; correspondingly, $\mc{Z}_0$ is repelling for $x<-1$ and attracting when $x>-1$, as illustrated again in \figref{figure2}.

\begin{figure}[h!]\centering
	\begin{tikzpicture}
	\node at (0,0){
		\includegraphics[width=0.4\textwidth]{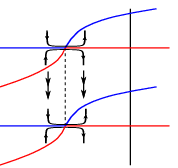}};
	\node at (-4,1.5) {$\theta = \frac{\pi}{2}$};
	\node at (-4,-1.5) {$\theta = -\frac{\pi}{2}$};
	\node at (-0.8,0.2) {\small$x=-1$};
	\node at (-1.75,0.9) {\small$x_0$};
	\node at (0.15,0.9) {\small$\tilde{x}$};
	\node at (-3,1.9) {$\textcolor{blue}{\mc{S}_0^a}$};
	\node at (1.3,1.2) {$\textcolor{red}{\mc{S}_0^r}$};
	\node at (-3,0.5) {$\textcolor{red}{\mc{Z}_0^r}$};
	\node at (1.3,3.2) {$\textcolor{blue}{\mc{Z}_0^a}$};
	
	\node at (-3,-1.15) {$\textcolor{blue}{\mc{S}_0^a}$};
	\node at (1.3,-1.8) {$\textcolor{red}{\mc{S}_0^r}$};
	\node at (-3,-2.5) {$\textcolor{red}{\mc{Z}_0^r}$};
	\node at (1.3,0.1) {$\textcolor{blue}{\mc{Z}_0^a}$};
	\end{tikzpicture}
	\caption{Stability of the branches $ \mc{S}_0 $ and $ \mc{Z}_0 $ of the critical manifold $ \mc{M}_0 $ of \eqref{polar1} (blue: attracting; red: repelling). The delayed loss of stability of the singularity at $(-1,\pm\frac{\pi}{2})$ can be studied via the classical entry-exit formula. Note that the horizontal lines $\theta=\pm\frac{\pi}{2}$ are invariant for $\varepsilon>0$, and that they are naturally identified due to the definition of $\mc{M}_0$ in \eqref{crima}.}
	\figlab{figure2}
\end{figure}

For the parameters defined in \thmref{thrm}, we calculate
\begin{equation*}
\begin{split}
\alpha &= \frac{1}{2}\dfrac{\partial^2 \Phi}{\partial \theta^2}(-1,-\tfrac{\pi}{2},0)= -1,\quad
\beta = \frac{1}{2}\dfrac{\partial^2 \Phi}{\partial x \partial \theta}(-1,-\tfrac{\pi}{2},0)= \frac{1}{2},\\
\gamma &= \frac{1}{2}\dfrac{\partial^2 \Phi}{\partial x^2}(-1,-\tfrac{\pi}{2},0)= 0,\quad\text{and}\quad
\delta = \frac{1}{2}\dfrac{\partial \Phi}{\partial \varepsilon}(-1,-\tfrac{\pi}{2},0)=0,
\end{split}
\end{equation*}
which implies that
\begin{equation*}
\lambda = \dfrac{\delta\alpha +\beta}{\sqrt{\beta^2-\gamma\alpha}}=1; 
\end{equation*}
the corresponding entry-exit function is therefore given by \eqref{invar}, i.e., by
\begin{align}
\int_{x_0}^{\tilde{x}} (-1) \text{d}x + \int_{\tilde{x}}^{x_1} x\text{d}x =0, \quad \text{which implies} \quad x_0 - \tilde{x} +\frac{x_1^2}{2}- \frac{\tilde{x}^2}{2}=0; \eqlab{wio1}
\end{align}
here, the auxiliary coordinate $\tilde{x}$ is calculated via \eqref{xtil} as
\begin{equation}\eqlab{relaz1}
\int_{x_0}^{\tilde{x}} (x+1) \text{d}x=0, \quad \text{which implies} \quad x_0+ \frac{x_0^2}{2} = \tilde{x}+\frac{\tilde{x}^2}{2}.
\end{equation}
Combining \eqref{wio1} and \eqref{relaz1}, we conclude
\begin{equation}\eqlab{exit1}
x_1=-x_0.
\end{equation}

Hence, for $\varepsilon>0$ sufficiently small, we observe a typical delayed loss of stability in \eqref{polar1} after $x=-1$, with the attracting branch $ \theta = \theta_1(x) $ becoming repelling. 
It follows that, for $x\in (x_0,\tilde{x})$, trajectories of \eqref{polar1} ``choose'' $\theta = \theta_1(x)$, i.e., the eigenvalue $\mu_1(x)=-1$ with corresponding eigendirection $(0,1)$ in \eqref{syst1}, while for $x>\tilde{x}$, trajectories ``choose'' $\theta = \theta_2(x)$, i.e., the eigenvalue $\mu_2(x)=x$ with eigendirection $(x,x+1)$. We remark that, in \eqref{syst1}, the small parameter $\varepsilon$ only changes the speed at which the slow variable $x$ evolves; hence, orbits in the $(x,z_1)$- and $(x,z_2)$-planes are identical, up to a rescaling of the time variable, for different positive values of $\varepsilon$. In particular, the exit point in \eqref{exit1} is independent of $ \varepsilon $ as long as $\varepsilon$ is sufficiently small.

\subsection{Coupled systems with $\varepsilon$--dependence}
Next, we consider the system
\begin{subequations}\eqlab{syst2}
\begin{align}
    x'&=\varepsilon,\\
    z_1'&= x z_1-\varepsilon z_2,\\
    z_2'&= x z_1-z_2
\end{align}
\end{subequations}
where, in contrast to \eqref{syst1}, the variables $ (z_1, z_2) $ are now two-way coupled for $\varepsilon >0$. The corresponding critical manifold for \eqref{syst2} again reads $\mc{C}_0=\{z_1=0=z_2\}$, {where the eigenvalues of the linearisation of the fast $(z_1,z_2)$-subsystem in \eqref{syst2} along $ \mc{C}_0 $ are given by} \eqref{ex1eigen},
as before; hence, we again have
$\mu_1(x^*)=-1=\mu_2(x^*)$ at $ x^* = -1 $. In polar coordinates, \eqref{syst2} becomes
\begin{subequations}\eqlab{polar2}
\begin{align}
x' &= \varepsilon,\\
\theta' &= x\cos^2\theta-(x+1)\sin\theta\cos\theta+\varepsilon  \sin^2\theta=:\Phi(x,\theta,\varepsilon).
\end{align}
\end{subequations}
The critical manifold $ \mc{M}_0 $ of \eqref{polar2} is again defined as in \eqref{crima} and consists of two branches given by \eqref{ex1S0} and \eqref{ex1Z0}, as before, which
which intersect at $ x=-1 $; see \figref{figure3} for an illustration. We emphasise that the branches \eqref{ex1S0} and \eqref{ex1Z0} of $ \mc{M}_0 $ are not invariant for \eqref{polar2} with $ \varepsilon>0 $. From \eqref{phix}, we again obtain \eqref{ex1deriv}, as before,
which again implies that $\mc{S}_0$ is attracting for $x<-1$ and repelling if $x>-1$, whereas $\mc{Z}_0$ is repelling for $x<-1$ and attracting when $x>-1$.

\begin{figure}[h!]\centering
	\begin{tikzpicture}
	\node at (0,0){
		\includegraphics[width=0.4\textwidth]{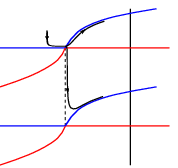}};
	\node at (-4,1.5) {$\theta = \frac{\pi}{2}$};
	\node at (-4,-1.5) {$\theta = -\frac{\pi}{2}$};
	\node at (-1.4,0) {\small$x=-1$};
	\node at (-1.5,2.4) {\small$x_0$};
	\node at (-3,1.9) {$\textcolor{blue}{\mc{S}_0^a}$};
	\node at (1.3,1.2) {$\textcolor{red}{\mc{S}_0^r}$};
	\node at (-3,0.5) {$\textcolor{red}{\mc{Z}_0^r}$};
	\node at (1.3,3.2) {$\textcolor{blue}{\mc{Z}_0^a}$};
	
	\node at (-3,-1.15) {$\textcolor{blue}{\mc{S}_0^a}$};
	\node at (1.3,-1.8) {$\textcolor{red}{\mc{S}_0^r}$};
	\node at (-3,-2.5) {$\textcolor{red}{\mc{Z}_0^r}$};
	\node at (1.3,0.1) {$\textcolor{blue}{\mc{Z}_0^a}$};
	
	\end{tikzpicture}
	\caption{Stability of the branches of the critical manifold of system \eqref{polar2}. Blue: stable; red: unstable. Notice that the horizontal lines $\theta=-\frac{\pi}{2},\frac{\pi}{2}$, corresponding to eigenvalue $\mu_2(x)=-1$ are \emph{not} invariant for $\varepsilon>0$. The curves correspond to eigenvalue $\mu_1(x)=x$, which eventually causes the loss of stability.}
	\figlab{figure3}
\end{figure}

For the parameters defined in \thmref{thrm}, we calculate
\begin{equation*}
\begin{split}
\alpha &= \frac{1}{2}\dfrac{\partial^2 \Phi}{\partial \theta^2}(-1,-\tfrac{\pi}{2},0)= -1,\quad
\beta = \frac{1}{2}\dfrac{\partial^2 \Phi}{\partial x \partial \theta}(-1,-\tfrac{\pi}{2},0)= \frac{1}{2},\\
\gamma &= \frac{1}{2}\dfrac{\partial^2 \Phi}{\partial x^2}(-1,-\tfrac{\pi}{2},0)= 0,\quad\text{and}\quad
\delta = \frac{1}{2}\dfrac{\partial \Phi}{\partial \varepsilon}(-1,-\tfrac{\pi}{2},0)= \frac{1}{2},
\end{split}
\end{equation*}
which implies that
\begin{equation*}
\lambda = \dfrac{\delta\alpha +\beta}{\sqrt{\beta^2-\gamma\alpha}}=0; 
\end{equation*}
the entry-exit function is therefore given by \eqref{trans}, i.e., by
\begin{align}
\int_{x_0}^{-1}(-1)\text{d}x + \int_{-1}^{x_1} x\text{d}x =0, \eqlab{wio2}
\end{align}
from which we calculate
\begin{equation}\eqlab{exit2}
    x_1=\sqrt{-1-2x_0}.
\end{equation}

We emphasise that, although equations~\eqref{syst1} and \eqref{syst2} differ in the $ \mc{O}(\varepsilon) $-terms of the $ z_1 $-equation only, the corresponding exit points obtained via \eqref{exit1} and \eqref{exit2} are different. We further reiterate that the formula in \eqref{wio2} is valid for initial conditions with $ x_0<-1 $, as for $ x \in [-1,0)$, \eqref{syst2} is diagonalisable with one direction that is always attracting; therefore, delayed loss of stability can be studied solely in the other direction along which the stability changes from attracting to repelling. In \figref{exit2}, we compare our prediction for the exit point -- which is given by \eqref{exit2} for $x_0<-1$ and by $x_1=-x_0$ for $x_0 \in [-1,0)$ -- with a numerical integration of \eqref{syst2}, where $\varepsilon=0.01$. The resulting figure highlights a very close match between the two curves. Moreover, in \figref{fig:timeser}, we illustrate orbits of \eqref{syst2} for varying values of $\varepsilon$, as indicated in the legend; the initial condition is set to $(x,z_1,z_2)(0)=(-2,1,1)$ throughout. We note that an increase in $\varepsilon$ decreases the exit time and ``smoothens'' orbits near $z_1=0$.
\begin{figure}[h!]\centering
	\includegraphics[width=0.6\textwidth]{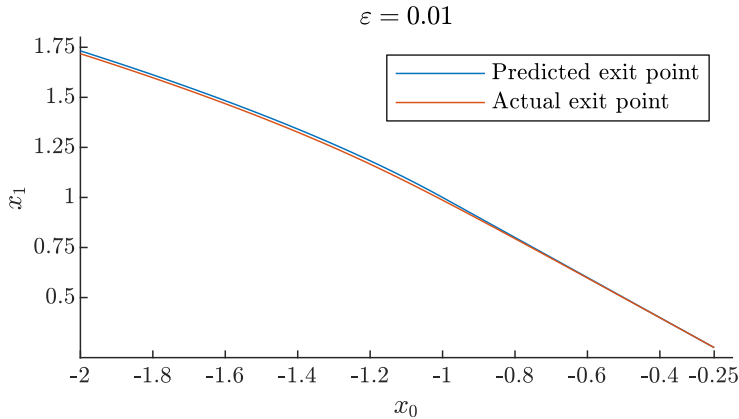}
	\caption{Exit points for entry points in the interval $x_0\in (-2,-0.25)$, as predicted by the formulae in \eqref{exit2} and \eqref{exit1} (blue) and as obtained by direct integration of equation~\eqref{syst2}, with $\varepsilon=0.01$ (red). The initial values for the fast variables are chosen as $(z_1,z_2)(0,0)=(1,1)$. Note that, after $x=-1$, the exit point coincides with the prediction from the standard entry-exit formula, which implies $x_1=-x_0$. The error is consistent with the expected $\mathcal{O}(\varepsilon)$-distance between the two curves.}
	\figlab{exit2}
\end{figure}
\begin{figure}[h!]\centering
	\includegraphics[width=0.6\textwidth]{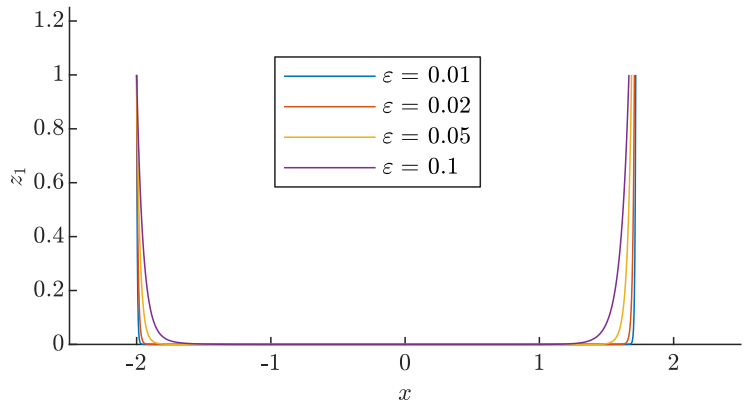}
	\caption{Exit points for equation~\eqref{syst2} with initial condition at $(x,z_1,z_2)(0)=(-2,1,1)$, where $\varepsilon$ varies as indicated. The formula in \eqref{exit2} predicts $x_1=\sqrt{3}\approx 1.732$ in this example; the actual exit point for $\varepsilon=0.01$ is found at $x\approx 1.718$.}
	\figlab{fig:timeser}
\end{figure}

\bigskip

Finally, in order to demonstrate that higher-order terms in $z_i$ ($i=1,2$) in the $ (z_1, z_2) $-subsystem do not locally affect the delay phenomena studied here, we consider the system
\begin{subequations}\eqlab{syst4}
	\begin{align}
        x'&=\varepsilon,\\
        z_1'&= x \bigg(z_1-\frac{z_1^2}{a}\bigg)+\varepsilon z_2,\\
        z_2'&= z_1^2-z_2
\end{align}
\end{subequations}
for some $a>0$.
The corresponding critical manifold is now given by $\mathcal{C}_0:=\{ z_1 = 0 = z_2 \} \cup \{ z_1=a \ \tn{and}\ z_2=a^2 \}$. Choosing $a>0$ sufficiently large, we can focus on the first portion of $\mc{C}_0$ and study the corresponding entry-exit function without considering the second, $a$-dependent portion. The eigenvalues of the linearisation of the $ (z_1, z_2) $-subsystem in \eqref{syst4} about $\lb z_1 = 0 = z_2\rb$ for $\varepsilon=0$ are again given by \eqref{ex1eigen}. Transformation to polar coordinates yields
\begin{equation*}
\begin{split}
    x' &= \varepsilon,\\
    \theta' &= -(1+x)\sin\theta\cos\theta+\varepsilon  \sin^2\theta=:\Phi(x,\theta,\varepsilon),
\end{split}
\end{equation*}
where we consider $r=0$ only, as above.

For the parameters in \thmref{thrm}, we calculate $\alpha=0$, $\beta=-\frac{1}{2}$, $\gamma=0$, and $\delta=\frac{1}{2}$ which, by \eqref{lam}, implies $\lambda = -1 \neq 1$; therefore, the entry-exit function is given by \eqref{trans}. The entry-exit formula in this example is hence again defined by \eqref{wio2}, with the $ x$-coordinate of the exit point explicitly given by \eqref{exit2}. Indeed, for an entry point with $x_0<-1$, the corresponding exit point satisfies $x_1=\sqrt{-1-2x_0}$, as seen in \figref{fig:exit4}, where we have chosen $a=4$ and $(z_1,z_2)(0)=(0.5,0.5)$.

\begin{figure}[h!]\centering
	\includegraphics[width=0.6\textwidth]{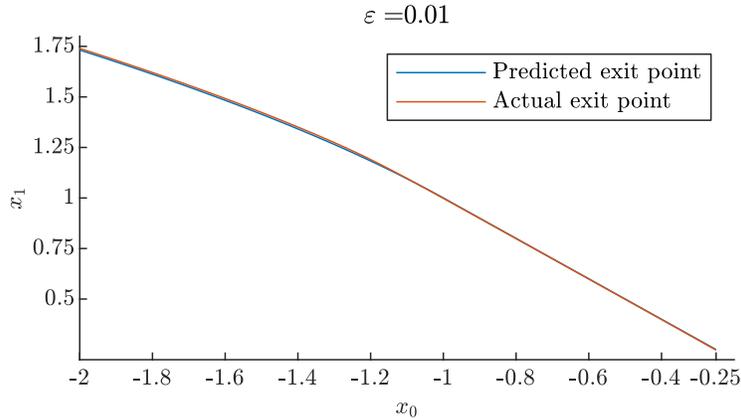}
	\caption{Exit points for entry points in the interval $x_0\in (-2,-0.25)$, as predicted by the formulae in \eqref{exit2} and \eqref{exit1} (blue) and as obtained by direct integration of \eqref{syst4}, with $a=4$ fixed and $\varepsilon=0.01$ (red). The initial values for the fast variables are chosen as $(z_1,z_2)(0)=(0.5,0.5)$. Note that, after $x=-1$, the exit point coincides with the prediction from the standard entry-exit formula, which implies $x_1=-x_0$. The error is consistent with the expected $\mathcal{O}(\varepsilon)$-distance between the two curves.}
	\figlab{fig:exit4}
\end{figure}

\section{Conclusions and outlook}\seclab{conc}
In this paper, we have studied the phenomenon of delayed loss of stability along one-dimensional critical manifolds in fast-slow systems with two fast and one slow variables, where the linearisation of the corresponding fast subsystem about that manifold has two real eigenvalues. More precisely, we have focused on the scenario where two of these eigenvalues coincide for some value of the slow variable before at least one of them becomes positive; hence, the ``leading''  eigenvalue and the corresponding ``stronger'' eigendirection change along the critical manifold, which renders the use of previously known entry-exit formulae unsuitable. 

Via a transformation to polar coordinates, we have uncovered the hidden structure of these systems, and we have proposed a methodology for deriving extended entry-exit formulae which cover different qualitative scenarios. We have illustrated our findings for a few simple prototypical examples, and we have verified them by numerical simulation. Notably, our analysis shows that a leading-order linearisation of the vector field about the corresponding critical manifold is sufficient for constructing entry-exit formulae in a robust fashion and for estimating accurately the resulting exit points after a delayed loss of stability. 

The phenomenon of ``crossing'' eigenvalues studied here is ubiquitous in systems with more than two fast variables. It may potentially occur also for more than one slow variable as in variants of  the models studied in \cite{boudjellaba2009dynamic,deng2003food,desroches2012mixed,jardon2020fast,jardon2020geometric,kuehn2015multiscale}. We postulate that our construction can be extended to such systems, by careful consideration of each intersection of the eigenvalues along the corresponding critical manifold. We leave a potential classification and extension of entry-exit formulae in analogy to those derived in \thmref{thrm} in higher dimensions, as well as the investigation of alternatives to the polar coordinate transformation performed here, for future work.

\section*{Acknowledgements}

The authors would like to thank Stephen Schecter for insightful discussions and relevant recommendations. 
CK thanks the VolkswagenStiftung for support via a Lichtenberg Professorship. CK also thanks the DFG for support via a Sachbeihilfe Grant.

\bibliographystyle{plain}
\bibliography{biblio}

\end{document}